\newtheorem{theorem}{Theorem}[section]
\newtheorem{proposition}[theorem]{Proposition}
\newtheorem{corollary}[theorem]{Corollary}
\newtheorem{lemma}[theorem]{Lemma}
\newtheorem{remark}[theorem]{Remark}
\newtheorem{example}[theorem]{Example}
\newtheorem*{acknowledgement}{Acknowledgement}
\theoremstyle{definition}
\newtheorem{definition}[theorem]{Definition}
\newtheorem{notation}[theorem]{Notation}
\numberwithin{equation}{section}
\newcommand{\masha}[1]
{{\color{red} Masha says: #1}}
\newcommand{\marco}[1]
{{\color{blue} Marco says: #1}}
\newcommand{\R}{\mathbb{R}}  
\newcommand{\E}{\mathbb{E}} 
\newcommand{\Prob}{\mathbb{P}}
\newcommand{\Hei}{\mathbb{H}}
\title[sub-Laplacians, spectral properties]{Dirichlet sub-Laplacians on homogeneous Carnot groups: spectral properties, asymptotics, and heat content}
\author[Carfagnini]{Marco Carfagnini{$^{\dag}$}}
\address{ Department of Mathematics\\
University of California, San Diego\\
La Jolla, CA 92093-0112,  U.S.A.}
\email{mcarfagnini@ucsd.edu}
\author[Gordina]{Maria Gordina{$^{\dag }$}}
\thanks{\footnotemark {$\dag$} Research was supported in part by NSF Grant DMS-1954264.}
\address{ Department of Mathematics\\
University of Connecticut\\
Storrs, CT 06269,  U.S.A.}
\email{maria.gordina@uconn.edu}
\keywords{Sub-Laplacian; homogeneous Carnot group; Dirichlet form; small ball problem; heat content}
\subjclass{Primary 35P05, 35H10; Secondary  35K08}
\begin{document}

\begin{abstract}
We consider sub-Laplacians in open bounded sets in a homogeneous Carnot group and study their spectral properties. We prove that these operators have a pure point spectrum, and prove the existence of the spectral gap. In addition, we give applications to the small
ball problem for a hypoelliptic Brownian motion and the large time
behavior of the heat content in a regular domain.
\end{abstract}

\maketitle

\tableofcontents

\section{Introduction}\label{s.intro}
This paper is focused on spectral properties of sub-Laplacians in subsets of homogeneous Carnot groups. The main difficulties include hypoellipticity of these operators and lack of smoothness of natural sets in such groups such as metric balls. Thus we can not rely on standard partial differential equations' techniques. In particular, we are interested in domains which are balls with respect to a homogeneous distance on such groups. These are classical questions at the intersection of potential theory, spectral analysis and probability in the setting of degenerate second order differential operators.

The mathematical literature on the subject is vast, and while our goal is to prove results of a classical flavor, it seems that they are not readily available. We rely on the theory of Dirichlet forms to show that a sub-Laplacian with Dirichlet boundary conditions on sets with no restrictions on the boundary are infinitesimal generators of a Hunt process, namely, a killed process. To further study boundary behavior of eigenfunctions we address the issue of regularity of boundary points being defined differently in analysis, potential theory and probability.

Our main results include Theorem \ref{thm.spectral.results} where we collect spectral properties of the Dirichlet sub-Laplacian $-\mathcal{L}_{\Omega}$ restricted to a bounded open connected subset $\Omega$ of a Carnot group $\mathbb{G}$. We prove that the spectrum of $-\mathcal{L}_{\Omega}$ is discrete, and then we show that the first Dirichlet eigenvalue is strictly positive and simple, i.e. $-\mathcal{L}_{\Omega}$ has a spectral gap. Moreover, in Proposition~\ref{prop.efunctions.properties} we prove uniform $L^{p}$-bounds and smoothness of the eigenfunctions of $-\mathcal{L}_{\Omega}$. These are well-known results for uniformly elliptic operators on domains with a smooth boundary, see \cite[Chapter 6]{EvansPDEBook2nd},  \cite[Corollary 5.1.2]{SoggeBook2017} \cite[Equation (3.3), p.~39]{ZelditchBook2017}. Our results hold without assuming regularity of the boundary $\partial \Omega$, and in particular apply to uniform elliptic operators on domains with a non-smooth boundary. If the boundary is regular then we can show that the eigenfunctions are zero on the boundary.
As standard PDEs' techniques are of limited use for hypoelliptic operators and for domains with a non-smooth boundary, we rely on the Dirichlet form theory, the Krein-Rutman theorem and irreducibility of the corresponding semigroup.

The spectral analysis in Section \ref{sec.spectral.prop} relies on the Dirichlet form theory on $L^{2} \left( \mathbb{G}, dx \right)$, where $\mathbb{G}$ is a homogeneous Carnot group and $dx$ is a (bi-invariant) Haar measure. A natural question is if these techniques are applicable to more general sub-Riemannian manifolds, but in such a setting we might not have a canonical choice of a measure $m$ which is needed to define a Dirichlet form on the corresponding $L^{2}$ space. We also indirectly rely on the fact that the Haar measure on $\mathbb{G}$ satisfies a volume doubling property. Finally, our approach uses the fact that sub-Laplacians on $L^{2} \left( \mathbb{G}, dx \right)$ are essentially self-adjoint on $C_{c}^{\infty}\left( \mathbb{G} \right)$ in $L^{2} \left( \mathbb{G}, dx \right)$ by \cite[Section 3]{DriverGrossSaloff-Coste2009a}.

Note that \cite{RuzhanskySuragan2017} proves a number of related results for sub-Laplacians on domains in homogeneous Carnot groups. The domains considered there are bounded open with a piecewise smooth and simple boundary. Lower bounds on the spectral gap for Dirichlet sub-Laplacians on compact domains with smooth boundary in sub-Riemannian manifolds have been studied recently in \cite{PrandiRizziSeri2019}. Their methods are different, and in particular, the Dirichlet form theory allows us to consider domains with a non-smooth boundary. Small time asymptotic expansions for hypoelliptic heat kernels  can be found in \cite{ColindeVerdiereHillairetTrelat2021}. As one of the applications is to the heat content asymptotics, we mention that one of the technical difficulties in the sub-Riemannian setting is potential presence of characteristic points on $\partial \Omega$. These are  points $x\in \partial \Omega$ where the horizontal distribution is tangent to $\partial \Omega$. We refer to \cite{TysonWangJ2018, RizziRossi2021} for a more detailed analysis of such points.

We remark that Dirichlet forms in the context of free nilpotent groups have been used in \cite{FrizVictoir2008, FrizVictoirBook2010} in connection with the theory of rough paths. Moreover, they relied on the general Dirichlet forms such as \cite{Sturm1995a, Sturm1996a, Sturm1995b} in the context of free nilpotent groups. In this setting they derived small ball estimates in the context of support theorem for Markovian rough paths.

The paper is organized as follows. In Section \ref{s.Preliminaries} we describe homogeneous Carnot groups, sub-Laplacians, and Dirichlet forms on such groups. In Section \ref{sec.spectral.prop} we describe spectral properties of Dirichlet sub-Laplacians which are collected in Theorem \ref{thm.spectral.results}. In Section \ref{sec.regularpts} we prove that analytic and probabilistic notions of boundary regular points are equivalent. We conclude Section \ref{sec.applications} with two applications: the small ball problem for a hypoelliptic Brownian motion and the large time behavior of the heat content in a regular domain $\Omega$. The latter is done with a natural assumption on the boundary $\partial \Omega$ being regular, therefore  we allow $\partial \Omega$ to contain characteristic points.

\section{Preliminaries}\label{s.Preliminaries}
\subsection{Carnot groups}
In this paper we concentrate on a particular class of nilpotent groups, namely, Carnot groups. We begin by recalling  basic facts about Carnot (stratified) groups. A more detailed description of these spaces can be found in a number of references, see for example \cite{BonfiglioliLanconelliUguzzoniBook, VaropoulosBook1992}.

\begin{definition}[Carnot groups]
We say that $\mathbb{G}$ is a Carnot group of step $r$ if $\mathbb{G}$  is a connected and simply connected Lie group whose Lie algebra $\mathfrak{g}$ is \emph{stratified}, that is, it can be written as
\begin{equation}\label{eqn.strat.lie.alg}
\mathfrak{g}=V_{1}\oplus\cdots\oplus V_{r},
\end{equation}
where
\begin{align*}
& \left[V_{1}, V_{i-1}\right]=V_{i}, \hskip0.1in 2 \leqslant i \leqslant r,
\\
& [ V_{1}, V_{r} ]=\left\{ 0 \right\}.
\end{align*}
\end{definition}
The stratification \eqref{eqn.strat.lie.alg} is not unique as pointed out in  \cite[Section 2.2.1]{BonfiglioliLanconelliUguzzoniBook}. Moreover, if $(V_{1}, \ldots , V_{r} )$ and $(\tilde{V}_{1}, \ldots, \tilde{V}_{s} )$ are two stratification of a  Carnot group $\mathbb{G}$ then $r=s$, which is referred as the step, and  $\dim V_{i} = \dim \tilde{V}_{i}$ for every $i=1, \ldots ,r$ \cite[Proposition 2.28]{BonfiglioliLanconelliUguzzoniBook}. If $\mathbb{G}$ is a Carnot group with stratification $(V_{1}, \ldots , V_{r} )$, by \cite[Proposition 2.2.8]{BonfiglioliLanconelliUguzzoniBook} it follows that the step $r$ and the number $m:=\dim V_{1}$ of generators of $\mathbb{G}$ do not depend on the chosen stratification.

For the rest of the paper we fix a stratification $(V_{1}, \ldots, V_{s} )$ of $\mathbb{G}$. This stratification determines the space $\mathcal{H} := V_{1}$ of horizontal vectors that generate the rest of the Lie algebra, nothing that $V_2=[\mathcal{H}, \mathcal{H}], ..., V_r = \mathcal{H}^{(r)}$. The horizontal space $\mathcal{H}$ is used to construct a sub-Laplacian on $\mathbb{G}$ for which we refer to \cite[Section 2.2]{BonfiglioliLanconelliUguzzoniBook}.

To avoid degenerate cases we assume that the dimension of the Lie algebra $\mathfrak{g}$ is at least $3$ which implies $\dim V_{1} \geqslant 2$. 
We generally assume that $r \geqslant 2$ to exclude the case when the corresponding Laplacian is elliptic.

In particular, Carnot groups are nilpotent. We will use $\mathcal{H}:=V_{1}$ to denote the space of \textit{horizontal} vectors that generate the rest of the Lie algebra, noting that $V_2=[\mathcal{H}, \mathcal{H}], ..., V_r = \mathcal{H}^{(r)}$. As usual, we let
\begin{align*}
\exp&: \mathfrak{g} \longrightarrow \mathbb{G},
\\
\log&:\mathbb{G} \longrightarrow \mathfrak{g}
\end{align*}
denote the exponential and logarithmic maps, which  are global diffeomorphisms for connected nilpotent groups, see for example \cite[Theorem 1.2.1]{CorwinGreenleafBook}.

Finally, by \cite[Proposition 2.2.17, Proposition 2.2.18]{BonfiglioliLanconelliUguzzoniBook} we can assume without loss of generality that a Carnot group can be identified with a \emph{homogeneous Carnot group}. For $i=1, ..., r$, let $d_{i}=\operatorname{dim} V_{i}$ and $d_0=0$. The Euclidean space underlying $\mathbb{G}$ has the dimension
\begin{align*}
N:=\sum_{i=1}^{r} d_{i},
\end{align*}
and the homogeneous dimension of $\mathbb{G}$ is given by
\begin{equation}\label{e.HomogeneousDim}
Q:=\sum_{i=1}^{r} i \cdot  d_{i}.
\end{equation}
The identification of $\mathbb{G}$ with $\mathbb{R}^{N}$ allows us to define exponential coordinates as follows.

\begin{definition}\label{d.2.1} We say that a collection of left-invariant vector fields $\left\{ X_{1}, ..., X_{N} \right\}$  on a Carnot group $\mathbb{G}$ is a \emph{basis for $\mathfrak{g}$ adapted to the stratification} if the set $\left\{ X_{d_{i-1}}, ..., X_{ d_{i-1}+d_{i}} \right\}$ is a basis of $V_{i}$ for each $i=1,\ldots,r$.
\end{definition}
The fact that $\exp$ is a global diffeomorphism can be used to parameterize $\mathbb{G}$ by its Lie algebra $\mathfrak{g}$. First we recall the Baker-Campbell-Dynkin-Hausdorff formula.
\begin{notation}\label{n.BCDH}
For any $X, Y \in \mathfrak{g}$, we define the \emph{Baker-Campbell-Dynkin-Hausdorff formula}  by
\begin{align*}
& \operatorname{BCDH}\left( X, Y \right):=\log\left( \exp X \exp Y \right)
\\
& =\sum\limits_{m=1}^{\infty} \sum\limits_{p_{i}+q_{i}\geqslant 1 }
\frac{(-1)^{m-1}}{m(\sum\limits_{i=1}^{m} p_i+q_i)}
\\
&
\times\frac{
	\overbrace{[[\cdots[X, X],\cdots], X}^{p_1}
],
\overbrace{Y], \cdots, Y}^{q_1}
],\cdots
\overbrace{\cdots, X], \cdots X}^{p_m}
	\overbrace{\cdots,Y],\cdots Y}^{q_m}
]}{p_1!q_1!\cdots p_m!q_m!}.
\end{align*}
\end{notation}

\begin{definition}\label{d.ExpCoord} Suppose $\left\{ X_{1}, \dots, X_{N} \right\}$ is a basis of $\mathfrak{g}$ adapted to the stratification. For a point $g \in \mathbb{G} \cong \mathbb{R}^{N}$ we say that $\left( x_{1}, \dots, x_{N} \right) \in \mathbb{R}^{N}$ are \emph{exponential coordinates of the first kind} relative to the basis $\left\{ X_{1}, \dots, X_{N} \right\}$  if
\[
g= \exp\left( \sum_{i=1}^{N} x_{i}X_{i} \right).
\]
We equip $\mathbb{R}^{N}$ with the group operation pulled back from $\mathbb{G}$ by
\begin{align*}
z&:=x \star y,
\\
\sum_{i=1}^{N} z_{i}X_{i}&=\operatorname{BCDH}\left( \sum_{i=1}^{N} x_{i}X_{i}, \sum_{i=1}^{N} y_{j}X_{j} \right).
\end{align*}
\end{definition}
In particular, in this identification $x^{-1}=-x$. Note that $\mathbb{R}^{N}$ with this group law is a Lie group whose Lie algebra is isomorphic to $\mathfrak{g}$. Both $\mathbb{G}$ and $\left( \mathbb{R}^{N}, \star \right)$ are nilpotent connected and simply connected, therefore the exponential coordinates give a diffeomorphism between $\mathbb{G}$ and $\mathbb{R}^{N}$.

A stratified Lie algebra is equipped with a natural family of \emph{dilations} defined for any $a>0$ by
\[
\delta_{a}\left( X \right):=a^i X, \qquad \text{for } X \in V_{i}.
\]
For each $a>0$, $\delta_a$ is a Lie algebra isomorphism, and the family of all dilations $\{ \delta_a \}_{a>0}$ forms a one-parameter group of Lie algebra isomorphisms. We again can use the fact that $\exp$ is a global diffeomorphism to define the automorphisms $D_{a}$ on $G$. The maps $D_{a}:=\exp \circ \, \delta_{a} \circ \log: \mathbb{G} \longrightarrow \mathbb{G}$ satisfy the following properties.
\begin{equation} \label{e.Dilations}
\begin{split}
&  D_{a}\circ \exp=\exp \circ\, \delta_{a} \quad\text{ for any } a >0,
\\
&  D_{a_{1}}\circ D_{a_{2}}=D_{a_{1}a_{2}}, D_{1}=I \quad\text{ for any } a_{1}, a_{2}>0,
\\
&  D_{a}\left( g_{1} \right)D_{a}\left( g_{2} \right)=D_{a}\left( g_{1}g_{2} \right) \quad\text{ for any } a>0 \text{ and } g_{1}, g_{2} \in G,
\\
& dD_{a}=\delta_{a}.
\end{split}\end{equation}
That is, the group $\mathbb{G}$ has a family of dilations which is adapted to its stratified structure. Actually $D_{a}$ is the unique Lie group automorphism corresponding to $\delta_{a}$.
On a homogeneous Carnot group $\mathbb{R}^{N}$ the dilation $D_{a}$ can be described explicitly by
\begin{equation*}
D_{a}\left( x_{1}, \dots, x_{N} \right):=\left( a^{\sigma_{1}}x_{1}, \dots, a^{\sigma_{N}}x_{N}\right),
\end{equation*}
where $\sigma_{j} \in \mathbb{N}$ is called the \emph{homogeneity} of $x_{j}$, with
\[
\sigma_{j}= i \quad\text{ for } \sum_{k=0}^{i-1}d_{k} +1 \leqslant j \leqslant \sum_{k=1}^{i}d_{k},
\]
with $i=1,\ldots,r$ and recalling that $d_0=0$.
That is, $\sigma_{1}=\dots= \sigma_{d_{1}}=1, \sigma_{d_{1}+1}=\cdots=\sigma_{d_1+d_2}=2$, and so on.

We assume that $\mathcal{H}$ is equipped with an inner product $\langle \cdot, \cdot\rangle_{\mathcal{H}}$, in which case the Carnot group has a natural sub-Riemannian structure. Namely, one may use left translation to define a \emph{horizontal distribution} $\mathcal{D}$ as a sub-bundle of the tangent bundle $T\mathbb{G}$, and a metric on $\mathcal{D}$. First, we identify the space $\mathcal{H} \subset \mathfrak{g}$ with $\mathcal{D}_{e}\subset T_e\mathbb{G}$. Then for $g\in \mathbb{G}$ let $L_{g}$ denote left translation $L_{g}h =gh$, and define $\mathcal{D}_{g}:=(L_g)_{\ast}\mathcal{D}_{e}$ for any $g \in \mathbb{G}$. A metric on $\mathcal{D}$ may then be defined by
\begin{align*}
\langle u, v \rangle_{\mathcal{D}_g} &:= \langle  (L_{g^{-1}})_{\ast} u, (L_{g^{-1}})_{\ast}v\rangle_{\mathcal{D}_e}
\\
&= \langle (L_{g^{-1}})_{\ast}u,(L_{g^{-1}})_{\ast}v\rangle_{\mathcal{H}} \qquad \text{for all } u, v \in\mathcal{D}_g.
\end{align*}
We will sometimes identify the horizontal distribution $\mathcal{D}$ and $\mathcal{H}$. Vectors in $\mathcal{D}$ are called \emph{horizontal}.

\subsection{Sub-Laplacians and Dirichlet forms}
For sub-Laplacians on sub-Riemannian complete manifolds  \cite[pp. 41-42]{Strichartz1986a} claimed that these operators are essentially self-adjoint, though without a complete proof or indication of  how to choose the reference measure. If the manifold is a Lie group, a natural choice is a Haar measure, and we review relevant results below. For a more recent approach we refer to  \cite{FranceschiPrandiRizzi2020}. To tackle more general sub-Riemannian manifolds in the future one might also use the semigroup approach in \cite{GordinaLaetsch2016a, GordinaLaetsch2017} combined with the Dirichlet form theory in \cite{MaRocknerBook, FukushimaOshimaTakedaBook2011}.

Suppose $G$ is a real connected Lie group, and $\left\{ X_{i} \right\}_{i=1}^{m}$ is a family of left-invariant vector fields on $G$ satisfying H\"{o}rmander's condition, then the  sum of squares operator
\[
\mathcal{L}:=\sum_{i=1}^{m} X_{i}^{2}
\]
is essentially self-adjoint on $C_{c}^{\infty}\left( G \right)$ in $L^{2}\left( G, dx \right)$ according to \cite[p.~950]{DriverGrossSaloff-Coste2009a}. Here $dx$ is a (right) Haar measure on $G$. In particular, the following integration by parts formula holds
\[
\int_{G} \sum_{i=1}^{m}X_{i}^{2}f dx=-\langle \mathcal{L}^{\ast}f, f\rangle_{L^{2}\left( G, dx \right)} < \infty
\]
for any $f \in C^{\infty}\left( G \right) \cap \mathcal{D}_{\mathcal{L}^{\ast}}$.
If $G=\mathbb{G}$ is a homogeneous Carnot group and $\left\{ X_{1}, ..., X_{d_{1}} \right\}$ is an orthonormal basis of $\mathcal{H}$ of left-invariant vector fields and $m=d_{1}$, then by \cite[Section 3]{DriverGrossSaloff-Coste2009a} the operator $\mathcal{L}$ depends only on the inner product on $\mathcal{H}$, and not on the choice of the basis. This is what \cite[Example 1.5.2]{BonfiglioliLanconelliUguzzoniBook} calls a canonical sub-Laplacian. More background on sub-Laplacians can be found in \cite[Section 3.1]{GordinaLaetsch2016a}. This is not the subject of this paper, though we mention that by \cite[Corollary 3.4]{GordinaLaetsch2016a} all sub-Laplacians differ only by first-order terms.  We abuse notation and denote by $X_{j}$ both the vector in $\mathcal{H}$ and the unique extension of this vector to the left-invariant vector field on $\mathbb{G}$.

\begin{remark}[Choice of the measure]\label{r.Measure}We chose a Haar measure as the reference measure for several reasons. First of all, the sub-Laplacians we are interested in are essential self-adjoint on $C_{c}^{\infty}\left( \mathbb{G} \right)$ in $L^{2}\left( \mathbb{G}, dy \right)$ by \cite[Section 3]{DriverGrossSaloff-Coste2009a}. Secondly, a Haar measure on the metric space $\left( \mathbb{G}, \vert \cdot \vert \right)$ is doubling which we need for using heat kernel estimates in \cite{Sturm1996a} The fact that a Haar measure is doubling in this setting follows from \cite[Theorem 3.2]{DriverGrossSaloff-Coste2009a} and \cite{JerisonSanchez-Calle1987, VaropoulosBook1992, NagelSteinWainger1985}. For more comments on how the sub-Laplacians might depend on the choice of a reference measure including on when we can write it as $\operatorname{div} \nabla_{\mathcal{H}}$  we refer to \cite[Section 4]{GordinaLaetsch2016a}.
\end{remark}

Before listing basic properties of this second order differential operator, recall that $\mathbb{G}$ is unimodular, and so we can assume that it is equipped with a (bi-invariant) Haar measure $dx$. Moreover, if $\mu$ is the push-forward of the $N$-dimensional Lebesgue measure $\mathcal{L}^{N}$ via the exponential map, then it is a bi-invariant Haar measure on $\mathbb{G}$ and
\begin{equation}\label{e.HaarScaling}
d\mu\left( y \circ \delta_{\lambda} \right) = \lambda^{Q}d\mu\left( y \right).
\end{equation}
If we identify $\mathbb{G}$ with the homogeneous Carnot group $\left( \R^N, \star, \delta_\lambda \right)$ via exponential coordinates in Definition \ref{d.ExpCoord}, then for a measurable set $E \subset \mathbb{G}$, its Haar measure is explicitly given by $\mu\left( E \right) =\mathcal{L}^{N}\left( \exp^{-1}\left(E\right) \right)$. We will abuse notation and use the same notation for both measures.

As observed in \cite[Section 3]{DriverGrossSaloff-Coste2009a}, the operator $\mathcal{L}$ is essentially self-adjoint on $C_{c}^{\infty}\left( \mathbb{G} \right)$ in $L^{2}\left( \mathbb{G}, dy \right)$. Thus the corresponding semigroup $e^{t \mathcal{L}}$ can be defined by the spectral theorem. This semigroup commutes with left translations since its infinitesimal generator is left-invariant as well. Namely, for every $y\in \mathbb{G}$ we have
\[
\mathcal{L} (f (y \circ x)) = ( \mathcal{L} f) (y \circ x) \quad \text{ for every } x \in \mathbb{G} \;\;\text{ and } f \in C_{c}^\infty (\mathbb{G}).
\]

We now recall some basic properties of such sub-Laplacians, for more properties including regularity of the corresponding heat kernel, parabolic Harnack inequality, Gaussian upper and lower bounds we refer to \cite[Theorem 3.4]{DriverGrossSaloff-Coste2009a} in a more general setting. Some of these properties rely on the fact that $\mathcal{L}$ is hypoelliptic by H\"ormander's hypoellipticity theorem \cite[Theorem 1.1]{Hormander1967a}, and doubling property for the metric we describe below.
In addition, on Carnot groups the operator $\mathcal{L}$ is $\delta_\lambda$-homogeneous of degree two, that is, for every fixed $\lambda >0$
\[
\mathcal{L} (f (\delta_\lambda ( x) ))  = \lambda^2 ( \mathcal{L} f) (  \delta_\lambda ( x)) \quad \text{ for every } \;\; x \in \mathbb{G} \;\;\text{ and } f \in C_{c}^\infty (\mathbb{G}),
\]
since vector fields $X_j$s are $\delta_\lambda$-homogeneous of degree one. For more details we refer to \cite[p.~63]{BonfiglioliLanconelliUguzzoniBook} on homogeneous Carnot groups.

The sub-Laplacian is symmetric since the group is unimodular, and hence the integration by parts formula reads

\begin{align}\label{eqn.int.by.parts1}
\int_{\mathbb{G}} \sum_{j=1}^{d_{1}} X_{j}f \cdot X_{j}g dy=- \int_{\mathbb{G}} \mathcal{L} f\cdot  g dy, f, g \in C_{c}^{\infty}\left( \mathbb{G} \right).
\end{align}

The symmetric form corresponding to the semigroup $e^{t\mathcal{L}}$ is
\begin{align}\label{e.DirichletForm}
& f \rightarrow \mathcal{E}\left( f \right):=\int_{\mathbb{G}} \vert \nabla_\mathcal{L} f (y) \vert^{2}_{\R^{d_{1}}}  dy,
\\
&  \text{ where } \nabla_\mathcal{L}:= \left( X_1, ...,  X_{d_{1}} \right)  \notag
\end{align}
is the horizontal gradient, and
\begin{align*}
& \mathcal{D}_{\mathcal{E}} :=  W^{1}_{2}\left( \mathbb{G}\right)
\\
& :=\left\{ f \in L^{2}(\mathbb{G},dx)  :  X_{i} f \in L^{2}(\mathbb{G}, dx) ,  \text{ for all} \; i=1,\ldots, d_{1} \right\},
\end{align*}
where $X_{i}f$ is to understood in the distributional sense. The form $\mathcal{E}$ is a Dirichlet form by \cite[p. 951]{DriverGrossSaloff-Coste2009a}. Note that $\mathcal{E}$ is a closed form, that is, $\mathcal{D}_{\mathcal{E}}$ is a Hilbert space with respect to the inner product
\begin{align*}
& \langle f,g \rangle_{W^{1}_{2}\left( \mathbb{G}\right)}  = \mathcal{E}(f,g) + \langle f,g \rangle_{L^{2}\left( \mathbb{G},dx\right)},
\end{align*}
where $\mathcal{E}(f,g)$ is obtained from \eqref{e.DirichletForm} by polarization. Moreover, by \eqref{eqn.int.by.parts1} we have that for $f \in \mathcal{D}_{\mathcal{E}}$

\begin{align}\label{eqn.int.by.parts}
& \int_{\mathbb{G}} \vert \nabla_\mathcal{L} f\left( y \right) \vert^2_{\mathcal{H}}dy = - \int_{\mathbb{G}}f  (y) \mathcal{L} f (y) dy.
\end{align}

The next step is to check that \eqref{e.DirichletForm} can be extended to a regular Dirichlet form. Recall that a Dirichlet form $\left( \mathcal{E}, \mathcal{D}_{\mathcal{E}} \right)$ is called \emph{regular} if it admits a core, that is, if there exists a subset $\mathcal{C}$ of $\mathcal{D}_{\mathcal{E}}\cap C_{0}\left( \mathbb{G} \right)$ that is dense both in $\mathcal{D}_{\mathcal{E}}$ with respect to the Sobolev norm $\Vert \cdot \Vert_{W^{1}_{2}\left( \mathbb{G}\right)}$, and in $C_{0}\left( \mathbb{G} \right)$ with respect to the $\sup$-norm. Note that $C_{c}^{\infty}\left( \mathbb{G} \right)$ is dense in $L^{2}\left( G, dx \right)$, and it is a core for the bilinear form $\mathcal{E}$. Thus, $\left( \mathcal{E}, \mathcal{D}_{\mathcal{E}} \right)$ is a regular Dirichlet form. In addition it is strongly local as defined in \cite[Definition 1.2]{GrigoryanHuLau2014}.

\begin{definition}\label{dfn.hom.norm}
Suppose $\mathbb{G} = \left( \R^N, \star, \delta_\lambda \right)$ is a homogeneous Carnot group, and $\rho:\mathbb{G} \rightarrow [0,\infty)$ is a continuous function with respect to the Euclidean topology. Then $\rho$ is a \emph{homogeneous norm} if it satisfies the following properties
\begin{align*}
& \rho\left( \delta_{\lambda} (x) \right) = \lambda \rho(x)  \text{ for every }  \lambda >0   \text{ and } x \in \mathbb{G},
\\
& \rho(x) >0  \text{ if and only if }  x\not=0.
\end{align*}
The norm $\rho$ is called \emph{symmetric} if it satisfies $\rho\left(x^{-1} \right) = \rho\left( x \right)$ for every $x \in \mathbb{G}$.
\end{definition}
If $\rho_{1}$ and $\rho_{2}$ are two homogeneous norms, then there exists a constant $c>0$ such that
\begin{equation}\label{eqn.equiv.hom.norms}
c^{-1} \rho_1(x) \leqslant \rho_2(x) \leqslant c \rho_1(x), \text{ for every } x \in \mathbb{G},
\end{equation}
see e.g. \cite[Proposition 5.1.4 p. 230]{BonfiglioliLanconelliUguzzoniBook}.
On every homogeneous Carnot group there exist  distinguished symmetric homogeneous norms related to a sub-Laplacian as follows.

\begin{definition}\label{dfn.gauges} A homogeneous symmetric norm $\rho$ on $\mathbb{G}$ is called an \emph{$\mathcal{L}$-gauge} if it is smooth everywhere except at the origin and
\begin{equation}\label{eqn.gauge}
\mathcal{L} (\rho^{2-Q} (x)) = 0,   x \in \mathbb{G}\setminus \left\{ 0 \right\},
\end{equation}
where $Q$ is the homogeneous dimension of $\mathbb{G}$.
\end{definition}
By \cite[Section 5.3]{BonfiglioliLanconelliUguzzoniBook} we know that  there exists a unique fundamental solution $\Gamma$ for the Poisson equation with a sub-Laplacian $\mathcal{L}$, that is, $\Gamma \in C^{\infty} \left( \mathbb{G} \setminus \{ 0\} \right) \bigcap L^1_{\operatorname{loc}} (\mathbb{G}, dx)$  and
\[
\mathcal{L} \Gamma = - \operatorname{Dirac}_0,
\]
where $\operatorname{Dirac}_{0}$ is the Dirac measure supported at $\{ 0 \}$.

$\mathcal{L}$-gauges and the fundamental solution for $\mathcal{L}$ are related as follows. If $\mathbb{G}$ is a Carnot group of homogeneous dimension $Q$ and $\Gamma$ is a fundamental solution for $\mathcal{L}$, then
\begin{equation}
\rho(x):=
\left\{
\begin{array}{ll}
\Gamma(x)^{\frac{1}{2-Q}}, & x\in \mathbb{G}\backslash \{ 0 \};
\\
0, & x=0.
  \end{array}
\right.
\end{equation}
is an $\mathcal{L}$-gauge on $\mathbb{G}$. By \cite[Section 5.5]{BonfiglioliLanconelliUguzzoniBook} if $\rho$ is an $\mathcal{L}$-gauge on $\mathbb{G}$, then there exists a constant $\alpha_d$ such that  $\Gamma = \alpha_{\rho} \rho^{2-Q}$ is Green's function for $\mathcal{L}$. As a consequence, the $\mathcal{L}$-gauge is unique up to a multiplicative constant.

\subsection{Regular boundary points: a probabilistic approach}

We now recall the notion of regular points for a differential operator $\mathcal{L}$ in a bounded open connected set $\Omega \subset \mathbb{R}^{N}$ as found in a number of references, in particular for Brownian motion in \cite[Chapter 8]{SchillingPartzschBook2012}. The connection with classical potential theory goes back to J.~Doob,  E.~B.~Dynkin,  M.~Kac \cite{Dynkin1959a, Doob1954, Kac1951} et al.

Suppose that $g_{\, \cdot} := \left\{ g_{t} \right\}_{t}$ is a $\mathbb{G}$-valued diffusion process whose infinitesimal generator is $\mathcal{L}$. Using exponential coordinates of the first type, we can view $g_{\, \cdot}$ as an $\R^{N}$-valued process. We start with a probabilistic definition of regular points as found in \cite[Definition 8.2.1]{KallianpurSundar2014}, and this is the definition used in \cite{Gaveau1977a} for $\mathcal{L}=-\frac{1}{2} \Delta_\mathcal{H}$ on $\mathbb{R}^{3} \cong \Hei$, the three-dimensional Heisenberg group.

\begin{definition}\label{d.regular.set.prob} Let $g_{\, \cdot}^{x}$ be the diffusion process  with generator $\mathcal{L}$ started at $x \in \partial \Omega$, where $\Omega$ is a bounded open connected set, and define
\[
\tau^{x}_{\Omega} := \inf \left\{ t > 0: g^{x}_{t} \in \Omega^{c} \right\}.
\]
We call $x$ a \emph{regular point} of $\partial \Omega$ if $\mathbb{P}^{x}\left( \tau^{x}_{\Omega} =0 \right)=1$. We call the set $\Omega$ \emph{regular} if every boundary point of $\Omega$ is regular. If $\mathbb{P}^{x}\left( \tau^{x}_{\Omega}=0 \right)<1$,  the point $x$ is called a \emph{singular point} of the boundary.
\end{definition}

In Section \ref{sec.regularpts} we compare Definition \ref{d.regular.set.prob} with an analytic definition of regular points used for hypoelliptic operators such as sub-Laplacians on homogeneous Carnot groups. Moreover, we prove that the two notion of regular points are equivalent.

\section{Spectral properties of the sub-Laplacian $\mathcal{L}$ restricted to a set $\Omega$}\label{sec.spectral.prop}

We rely on the Dirichlet form theory to describe a restriction of $\mathcal{L}$ to a set $\Omega$ in $\mathbb{G}$. Our main reference here is \cite{FukushimaOshimaTakedaBook2011}, and in a more relevant setting of metric measure spaces \cite[Section 6.1]{Grigor'yan2010a} and \cite[p.~173]{GrigoryanHuLau2014}. In particular, we use this approach to show that $\mathcal{L}_{\Omega}$ has a discrete spectrum with minimal assumptions on the boundary of $\Omega$.

Let $\Omega$ be a bounded open connected set in $\mathbb{G}$ and define
\[
\mathcal{D}_{\mathcal{E}} (\Omega) :=  \overline{\left\{ f\in \mathcal{D}_{\mathcal{E}}:  \operatorname{supp} f \subset \Omega \right\}}^{W_2^1 },
\]
where $\Vert f \Vert^{2}_{W_2^1 } = \mathcal{E}(f) + \Vert f \Vert^{2}_{L^{2} (\mathbb{G},dx)}$, and $(\mathcal{E}, \mathcal{D}_{\mathcal{E}})$ is given in \eqref{e.DirichletForm}. Then by  \cite[Section 3.2 Theorem 3.3]{GrigoryanHuLau2014} we know that $\left( \mathcal{E}, \mathcal{D}_{\mathcal{E}} (\Omega) \right)$ is a regular Dirichlet form on $L^2 (\Omega, dx )$. Note that $\mathcal{D}_{\mathcal{E}} (\Omega)$ is dense in  $L^2(\Omega, dx )$ since $\left( \mathcal{E}, \mathcal{D}_{\mathcal{E}} (\Omega) \right)$ is a regular Dirichlet form. We denote by $-\mathcal{L}_{\Omega}$ the non-negative self-adjoint operator on $L^2\left( \Omega, dx \right)$ corresponding to the Dirichlet form $\left( \mathcal{E}, \mathcal{D}_{\mathcal{E}} (\Omega) \right)$.

\begin{notation} We denote the \emph{semigroups} corresponding to the Dirichlet forms $\left( \mathcal{E}, \mathcal{D}_{\mathcal{E}}\right)$ and $\left( \mathcal{E}, \mathcal{D}_{\mathcal{E}}\left( \Omega \right) \right)$ by
\begin{align*}
& P_{t}:= e^{t \mathcal{L}},
\\
& P_{t}^{\Omega}:= e^{t \mathcal{L}_{\Omega}}
\end{align*}
respectively.
\end{notation}
The domains of the corresponding infinitesimal generators are given by
\begin{align*}
& \mathcal{D}(-\mathcal{L}):= \left\{ f\in L^{2} (\mathbb{G}, dx) : \lim_{t\downarrow 0} \frac{P_{t} f-f}{t} \text{ exists } \right\},
\\
& \mathcal{D}(-\mathcal{L}_{\Omega}):= \left\{ f\in L^{2} (\Omega, dx) : \lim_{t\downarrow 0} \frac{P^{\Omega}_{t} f-f}{t} \text{ exists } \right\}.
\end{align*}
We now give a probabilistic description of the semigroups $P_{t}$ and $P_{t}^{\Omega}$. Homogeneous Carnot groups are complete metric spaces, therefore by \cite[Proposition 3.1]{Sturm1996a} the semigroup $P_{t}$ has a heat kernel $p_{t}\left( x, y \right)$ which is continuous in $x, y \in \mathbb{G}$. We also know that by Chow-Rashevskii's theorem that the intrinsic metric induced by the Dirichlet form $\mathcal{E}$ coincides with the original topology, so the Dirichlet form is \emph{strongly regular}. The heat kernel $p_{t} (x,y)$ is the transition density of $g_{t}$, that is,
\begin{equation}\label{eqn.global.tran.prob}
\Prob^{x} \left(  g_{t} \in E \right) = \int_E p_{t}(x,y)dy,
\end{equation}
for any Borel set $E$ in $\mathbb{G}$, where $g_{t}$ is the Markov process whose generator is the sub-Laplacian $\mathcal{L}$.  The strongly continuous semigroup associated to the Dirichlet form  $\left( \mathcal{E}, \mathcal{D}_{\mathcal{E}}  \right)$ is given in terms of $g_{t}$ by
\begin{equation*}
P_{t} f(x) = \E^{x} [ f(g_{t}) ], \quad x \in \mathbb{G}, \; f \in L^2(\mathbb{G}, dx).
\end{equation*}

\begin{definition}\label{dfn.hypoBM}
Let $g_{t}$ be the $\mathbb{G}$-valued Markov process with the transition density given by the heat kernel $p_{t}(x,y)$. Then we refer to $g_{t}$ as the \emph{hypoelliptic Brownian motion}.
\end{definition}
The heat kernel is the fundamental solution to the heat equation
\begin{align}
& \left( \partial_{t} - \frac{1}{2} \mathcal{L} \right) p_{t}(x, \cdot) =0, \notag
\\
& p_{t} (x,y) dy \rightarrow \text{Dirac}_{x} (dy) \quad \text{weakly as} \; t\rightarrow 0, \label{eqn.delta}
\end{align}
where $\text{Dirac}_{x}(dy)$ is the Dirac measure centered at $\{x\}$, see \cite[Equation (3.6)]{DriverGrossSaloff-Coste2009a}.

Following   \cite[Section 2.11]{ByczkowskiLectureNotes2013}, we let $\Omega$ be a bounded open connected set and
\[
\tau_{\Omega}:= \inf \left\{ t>0 \, : \, g_{t} \notin \Omega \right\}.
\]
Then we can use Hunt's formula
\begin{equation}\label{eqn.Dirichlet.HK}
p^{\Omega}_{t}(x,y):= p_{t}(x,y) - \E^x \left[ \mathbbm{1}_{\{ \tau_{\Omega} < t
\} } \,  p_{ t- \tau_{\Omega}} \left( g_{\tau_{\Omega}}, y\right) \right]
\end{equation}
for the transition density $p^{\Omega}_{t} (x,y)$ of the killed Markov process $g_{t}^{\Omega}$ given by
\begin{equation*}
g_t^{\Omega}:=
\left\{ \begin{array}{cc}
g_t & t < \tau_{\Omega},
\\
\partial & t \geqslant \tau_{\Omega},
\end{array} \right.
\end{equation*}
where $\partial$ is the \emph{cemetery} point. More precisely, we have that
\begin{equation}\label{eqn.density}
\Prob^{x} \left( g^{\Omega}_{t} \in E \right) = \Prob^{x} \left( g_{t} \in E,  \, t<\tau_{\Omega} \right) = \int_{E} p^{\Omega}_{t}(x,y) dy,
\end{equation}
for any $x\in \Omega$ and any Borel subset $E$ of $\Omega$. In particular,
\begin{equation}\label{eqn.exit.time}
\Prob^{x} \left( \tau_{\Omega} > t \right) = \int_{\Omega} p^{\Omega}_{t}(x,y)dy.
\end{equation}
We refer to $p_{t}^{\Omega}$ as the Dirichlet heat kernel. Note that regularity of the Dirichlet form implies that $g^{\Omega}_{t}$ is a Hunt process as well. Then the semigroup $P_{t}^{\Omega}$ can be viewed as
\begin{align}
& P^{\Omega}_{t} : L^{2}(\Omega, dx) \longrightarrow L^{2}(\Omega, dx) ,  \notag
\\
& P^{\Omega}_{t} f (x) = \E^{x} \left[ f(g^{\Omega}_{t} ) \right] =\E^{x} \left[ f(g_{t} ), \, t < \tau_{\Omega} \right]= \int_{\Omega} p_{t}^{\Omega} (x,y) f(y) dy , \label{eqn.semigroup.restricted}
\end{align}
for any $f \in L^{2}(\Omega, dx)$. Note that by \cite[Proposition 3.1]{Sturm1996a} applied to the Dirichlet  form $(\mathcal{E},  \mathcal{D}_{\mathcal{E}} \left( \Omega \right) )$ on $L^{2} (\Omega, dx)$ it follows that the function $p^{\Omega}_{t}(x,y)$ is H\"older continuous on $[T, \infty ) \times \Omega \times \Omega$.

The following theorem is the main result of this section, where we collect spectral properties of the operator $\mathcal{L}_{\Omega}$.

\begin{theorem}\label{thm.spectral.results}
Let $\Omega$ be a bounded open connected subset of $\mathbb{G}$, and $P^{\Omega}_{t}$ the semigroup associated to the Dirichlet form $\left( \mathcal{E}, \mathcal{D}_{\mathcal{E}}\left( \Omega \right) \right)$ with generator $\mathcal{L}_{\Omega}$. Then
\begin{enumerate}
\item For every $t>0$, $P^{\Omega}_{t} $ is a Hilbert-Schmidt operator on $L^{2} (\Omega, dx)$ and its spectrum is given by
\[
\sigma (P^{\Omega}_{t} )\backslash \{ 0 \} = \left\{ e^{-\lambda_{n} t} \right\}_{n \in \mathbb{N}},
\]
where $\{ \lambda_{n} \}_{n\in \mathbb{N}} = \sigma_{pp} (- \mathcal{L}_{\Omega} )$  is the point spectrum of $- \mathcal{L}_{\Omega}$, with $0\leqslant \lambda_{1} \leqslant \lambda_{2} \leqslant \cdots$.

\item The operator $-\mathcal{L}_{\Omega}$ has a spectral gap, that is, $\lambda_{1} >0$.

\item There exists an orthonormal basis $\{ \phi_{n} \}_{n\in \mathbb{N}}$ of $L^{2} (\Omega, dx)$ such that for every $n\in \mathbb{N},\, t>0$,
\begin{align*}
& P_{t}^{\Omega} \phi_{n}  = e^{-\lambda_{n} t} \phi_{n},
\end{align*}
Moreover, for  every $n\in \mathbb{N}$, $\phi_{n} \in \mathcal{D} ( -\mathcal{L}_{\Omega})$ and
\begin{align*}
- \mathcal{L}_{\Omega} \phi_{n} =  \lambda_{n} \phi_{n}.
\end{align*}
\end{enumerate}
\end{theorem}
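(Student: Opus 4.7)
The plan is to derive (1)--(3) from compactness of $P_t^\Omega$ together with positivity and irreducibility of its kernel. For part (1), the form $(\mathcal{E},\mathcal{D}_{\mathcal{E}})$ is a strongly local regular Dirichlet form on the doubling metric measure space $(\mathbb{G},|\cdot|,dx)$ (Remark~\ref{r.Measure}), so Sturm's parabolic theory \cite{Sturm1996a} supplies a Gaussian upper bound on the global heat kernel $p_t(x,y)$, uniform over $\Omega\times\Omega$. Combined with $0\leq p_t^\Omega\leq p_t$ from Hunt's formula \eqref{eqn.Dirichlet.HK} and boundedness of $\Omega$, this gives
\[
\iint_{\Omega\times\Omega} p_t^\Omega(x,y)^2\,dx\,dy<\infty,
\]
so $P_t^\Omega$ is Hilbert--Schmidt, hence compact and self-adjoint on $L^2(\Omega,dx)$. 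The spectral theorem for compact self-adjoint operators yields a non-increasing sequence of positive eigenvalues $\mu_n\downarrow 0$ together with an orthonormal basis of eigenvectors $\{\phi_n\}$. Writing $\mu_n=e^{-\lambda_n t}$ and using $P_t^\Omega=e^{t\mathcal{L}_\Omega}$ (spectral calculus for the self-adjoint generator), the $\phi_n$ become simultaneous eigenvectors for every $t>0$, lie in $\mathcal{D}(-\mathcal{L}_\Omega)$, and satisfy $-\mathcal{L}_\Omega\phi_n=\lambda_n\phi_n$. This proves (1) and (3) together.

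For the spectral gap in (2), I would first show $\lambda_1>0$ via a Poincar\'e-type argument. Suppose $\phi\in\mathcal{D}_\mathcal{E}(\Omega)$ with $\mathcal{E}(\phi)=0$, so that $|\nabla_\mathcal{L}\phi|=0$ a.e. Extending $\phi$ by zero produces $\tilde\phi\in\mathcal{D}_\mathcal{E}$ with the same vanishing horizontal gradient, by the very definition of $\mathcal{D}_\mathcal{E}(\Omega)$ as a $W_2^1$-closure of functions supported in $\Omega$. By H\"ormander hypoellipticity applied to $\mathcal{L}\tilde\phi=0$ (and connectedness of $\mathbb{G}$), $\tilde\phi$ must be constant on $\mathbb{G}$; boundedness of $\Omega$ forces that constant to be zero. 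Hence $\ker(-\mathcal{L}_\Omega)=\{0\}$ and $\lambda_1>0$.

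Simplicity of $\lambda_1$ I would establish via the Krein--Rutman theorem applied to $P_t^\Omega$: compactness is given by the first step, positivity preservation follows from $p_t^\Omega\geq 0$, and the remaining input is irreducibility, namely $p_t^\Omega(x,y)>0$ for every $t>0$ and every $x,y\in\Omega$. For the latter I would combine two ingredients: global positivity $p_t(x,y)>0$ stemming from H\"ormander's theorem together with Chow--Rashevskii connectivity, and the support theorem for the hypoelliptic diffusion $g_t$, which gives $\Prob^x(\tau_\Omega>t,\,g_t\in U)>0$ for any open $U\subset\Omega$ containing $y$, because one can follow a horizontal path joining $x$ to $y$ inside $\Omega$. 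Strict positivity of $p_t^\Omega(x,y)$ then follows from H\"older continuity of the Dirichlet kernel (noted after \eqref{eqn.semigroup.restricted}). Krein--Rutman delivers simplicity of the top eigenvalue $e^{-\lambda_1 t}$ of $P_t^\Omega$, whence simplicity of $\lambda_1$.

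The main obstacle I anticipate is precisely the irreducibility step. The absence of any regularity assumption on $\partial\Omega$ rules out classical boundary Harnack arguments and forces one to rely purely on interior behavior and stochastic representations. If the support-theorem route runs into technical friction near irregular portions of $\partial\Omega$, a backup is a purely Dirichlet-form argument: the Markovian inequality $\mathcal{E}(|\phi|)\leq \mathcal{E}(\phi)$, valid on any Dirichlet form, shows that any minimizer of the Rayleigh quotient has constant sign, and two non-trivial non-negative eigenfunctions cannot be $L^2(\Omega,dx)$-orthogonal, forcing one-dimensionality of the $\lambda_1$-eigenspace.
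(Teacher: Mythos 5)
Your parts (1) and (3) follow essentially the paper's route: Sturm's uniform heat-kernel bound $p_t(x,y)\leqslant At^{-Q/2}$ plus domination $p_t^\Omega\leqslant p_t$ from Hunt's formula gives $p_t^\Omega\in L^2(\Omega\times\Omega)$, hence $P_t^\Omega$ is Hilbert--Schmidt, compact, and self-adjoint, and the spectral theorem produces the eigensystem. The only (harmless) difference is that you pass from $\sigma(P_t^\Omega)$ to $\sigma(-\mathcal{L}_\Omega)$ via the functional calculus for the self-adjoint generator, while the paper invokes the spectral mapping theorem of \cite{ArendtGraboschGreinerMoustakasNagelNeubranderSchlotterbeck1986} for $C_0$-semigroups and has to argue away the spurious imaginary shifts $2\pi ij/t$; for a self-adjoint generator your route is slightly cleaner.

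For part (2) you take a genuinely different path. The paper argues by contradiction: if $\lambda_1=0$, then $1\in\sigma_{pp}(P_t^\Omega)$ for all $t$, so $1\leqslant\|P_t^\Omega\|^2\leqslant\|p_t^\Omega\|_{L^2(\Omega\times\Omega)}^2\leqslant A^2|\Omega|^2t^{-Q}$, which fails for large $t$. This reuses the same heat-kernel estimate that made $P_t^\Omega$ Hilbert--Schmidt and requires no further PDE input. Your argument is a Poincar\'e-type one: a zero eigenfunction has $\mathcal{E}(\phi)=0$, so $\nabla_\mathcal{L}\phi=0$ a.e.\ on $\mathbb{G}$, hence $\mathcal{L}\phi=0$ distributionally, and H\"ormander hypoellipticity plus bracket-generation (equivalently Chow--Rashevskii connectivity) forces $\phi$ to be constant; boundedness of $\operatorname{supp}\phi\subset\overline\Omega$ then forces $\phi\equiv0$. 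This is correct (note that, by the definition of $\mathcal{D}_{\mathcal{E}}(\Omega)$ as a $W_2^1$-closure, $\phi$ is already a function on $\mathbb{G}$ supported in $\overline\Omega$, so there is no actual ``extension'' to perform), and it brings in connectedness and hypoellipticity explicitly. The trade-off: the paper's argument is entirely soft once the heat-kernel bound is in hand and sidesteps any regularity discussion, while yours is more structural and would generalize to settings where you have a Poincar\'e inequality but perhaps not the precise on-diagonal decay rate.

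One scope remark: the theorem's part (2) asserts only $\lambda_1>0$. Simplicity of $\lambda_1$ (and strict positivity of $p_t^\Omega$ and irreducibility of the semigroup) is a separate result in the paper, namely Theorem~\ref{thm.simple.evalue}, proved via Theorem~\ref{rmk.irreduc} and Krein--Rutman. Your sketch of simplicity via Krein--Rutman together with either the support theorem or the Markovian inequality $\mathcal{E}(|\phi|)\leqslant\mathcal{E}(\phi)$ is a reasonable outline for that later theorem, but it is extra work relative to the present statement.
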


\begin{corollary}\label{cor.discrete.spec}
The operator $\mathcal{L}_{\Omega}$ has a pure point spectrum.
\end{corollary}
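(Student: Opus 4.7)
The corollary is essentially an immediate consequence of Theorem \ref{thm.spectral.results}, so the plan is to argue directly from the statements already established there, and my main task is just to unpack what ``pure point spectrum'' means and check that it is implied.

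My plan is to use the characterization that a self-adjoint operator $A$ on a Hilbert space $H$ has pure point spectrum if and only if $H$ admits an orthonormal basis consisting of eigenvectors of $A$ (equivalently, the closed linear span of the eigenvectors is all of $H$, so that the spectral measure is purely atomic). Given this characterization, Theorem \ref{thm.spectral.results}(3) supplies exactly what is needed: it produces an orthonormal basis $\{\phi_n\}_{n\in\mathbb{N}}$ of $L^2(\Omega, dx)$ with $-\mathcal{L}_\Omega \phi_n = \lambda_n \phi_n$, and therefore $-\mathcal{L}_\Omega$, and hence $\mathcal{L}_\Omega$, has pure point spectrum.

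As a small sanity-check I would phrase a second route via the semigroup: by Theorem \ref{thm.spectral.results}(1), $P_t^\Omega$ is Hilbert-Schmidt, hence compact and self-adjoint on $L^2(\Omega, dx)$, so the classical Hilbert-Schmidt/Riesz-Schauder theorem gives that $P_t^\Omega$ has pure point spectrum. Since $P_t^\Omega = e^{t\mathcal{L}_\Omega}$ in the sense of the spectral calculus, the spectral mapping theorem $\sigma(P_t^\Omega)\setminus\{0\} = \{e^{t\lambda} : \lambda \in \sigma(\mathcal{L}_\Omega)\}$ (together with the injectivity of the exponential on $(-\infty, 0]$) transports the pure point structure from $P_t^\Omega$ to $\mathcal{L}_\Omega$: any spectral projector of $\mathcal{L}_\Omega$ associated with an interval corresponds under the spectral mapping to a spectral projector of $P_t^\Omega$ onto a union of eigenspaces, hence is itself a finite-rank projection onto eigenspaces of $\mathcal{L}_\Omega$.

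There is no real obstacle here; the content of the corollary is already packaged inside Theorem \ref{thm.spectral.results}, and the only care I would take is to state clearly which definition of ``pure point spectrum'' I am using (orthonormal basis of eigenvectors, equivalently, trivial continuous and singular continuous parts of the spectral measure), so that the one-line deduction from part (3) of the theorem is unambiguous. No calculations, compactness arguments, or boundary regularity issues enter at this last step.
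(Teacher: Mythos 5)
Your proposal is correct and, in substance, matches the paper: both derive the corollary as an immediate consequence of Theorem \ref{thm.spectral.results}(3). The only cosmetic difference is that the paper routes through \cite[Theorem XIII.64]{ReedSimonIV} to conclude that $(-\mu-\mathcal{L}_\Omega)^{-1}$ is compact (hence compact resolvent, hence purely discrete spectrum), whereas you invoke directly the equivalence ``pure point spectrum $\Leftrightarrow$ orthonormal basis of eigenvectors''; your secondary argument via compactness of $P_t^\Omega$ and the spectral mapping theorem is also valid but not needed once part (3) is in hand.
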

\begin{proof}[Proof of Corollary \ref{cor.discrete.spec}]
By \cite[Theorem XIII.64 p.245]{ReedSimonIV} and Theorem \ref{thm.spectral.results} part (3) it follows that $(-\mu - \mathcal{L}_{\Omega} )^{-1}$ is a compact operator for every $\mu$ in the resolvent set of $\mathcal{L}_{\Omega}$, proving that $\mathcal{L}_{\Omega}$ has a pure point spectrum.
\end{proof}

\begin{proof}[Proof of Theorem \ref{thm.spectral.results}]
(1) Let us first show that the semigroup  $P^{\Omega}_{t}$ is a  Hilbert-Schmidt operator for each $t$. We rely on \cite{Sturm1996a} for a heat kernel estimate which only requires volume doubling and Poincar\'{e}'s inequality without compactness assumption on the underlying metric space, unlike in \cite{JerisonSanchez-Calle1986}. Doubling and Poincar\'{e}'s inequality are known to hold in our setting, fro example, by \cite[Theorem 3.2]{DriverGrossSaloff-Coste2009a}. By \cite[Equation (4.2)]{Sturm1996a} with $\varepsilon=1$, there exists a constant $A>0$ such that
\begin{align}\label{eqn.bound.heat.kernel}
& p_{t} (x,y) \leqslant A t^{-\frac{Q}{2}},
\end{align}
for all $t>0$ and $x,y\in \mathbb{G}$, where $Q$ is the homogeneous dimension of $\mathbb{G}$. Thus, for any $t>0$
\begin{align*}
&\Vert p^{\Omega}_{t} \Vert^{2}_{L^{2} (\Omega \times \Omega)}= \int_{\Omega} \int_{\Omega} p^{\Omega}_{t} (x,y)^{2} dx dy  \leqslant  \int_{\Omega} \int_{\Omega} p_{t}  (x,y)^{2} dx dy
\\
&\leqslant   A^{2} \vert \Omega \vert ^2t^{-Q}< \infty,
\end{align*}
where we used that $p_{t}^{\Omega}(x,y) \leqslant p_{t} (x,y)$ for almost all $x,y \in \Omega$. Indeed, for any Borel set $E\subset \Omega$, $t>0$, and all $x\in \Omega$ we have that
\begin{align*}
& \int_{E} p^{\Omega}_{t}(x,y)dy= \Prob^{x} \left( g_{t}^{\Omega}\in E \right) = \Prob^{x} \left( g_{t} \in E, \tau_{\Omega} >t \right)
\\
& \leqslant \Prob^{x} \left( g_{t}\in E \right) = \int_{E} p_{t}(x,y)dy,
\end{align*}
and hence $p^{\Omega}_{t} (x,y) \leqslant p_{t} (x,y)$ for any $x\in \Omega$ and for a.e. $y\in\Omega$. The estimate then follows for every $y\in \Omega$ since both $p_{t}$ and $p_{t}^{\Omega}$ are continuous on $\Omega\times \Omega$.

The operator $P^{\Omega}_{t}$ is then Hilbert-Schimdt since $p_{t}^{\Omega} \in L^{2} (\Omega \times \Omega)$.  In particular,  for every $t>0$ the operator $P^{\Omega}_{t}$ is compact, and hence by the spectral theorem for compact operators there exists a sequence of decreasing eigenvalues  $\{  \lambda_{n} (t)\}_{n\in \mathbb{N}}$  and corresponding eigenfunctions $\{ \phi_{n,k}^{(t)} \}_{n,k \in \mathbb{N}}$ such that  for every $t>0$
\begin{align}
& \sigma (P^{\Omega}_{t} ) \backslash \{ 0 \} = \sigma_{pp} (P^{\Omega}_{t} ) \backslash \{ 0 \} = \{  \lambda_{n} (t)\}_{n\in \mathbb{N}},  \notag
\\
&  \ker \left( \lambda_{n} (t) - P^{\Omega}_{t}  \right)= \overline{\operatorname{Span}  \{ \phi_{n,k}^{(t)}, k\in \mathbb{N} \} } \text{ for every } n\in \mathbb{N},  \notag
\\
& L^{2}(\Omega, dx)=\bigoplus_{n=1}^{\infty} \overline{\operatorname{Span} \{ \phi_{n,k}^{(t)}, k \in \mathbb{N}  \}}.\label{eqn.orth.bas}
\end{align}
The semigroup  $P_{t}^{\Omega}$ is strongly continuous because the Dirichlet form $\left( \mathcal{E}, \mathcal{D}_{\mathcal{E}} \left( \Omega \right) \right)$ is regular, and hence  by the spectral mapping theorem for semigroups \cite[Theorem 6.3]{ArendtGraboschGreinerMoustakasNagelNeubranderSchlotterbeck1986}
\begin{align}\label{eqn.spec.mapp.thm}
\sigma_{pp}  (  P^{\Omega}_{t}  ) \backslash \{0\} = \exp \left( t\,  \sigma_{pp} ( \mathcal{L}_{\Omega} ) \right), \text{ for any }  t>0.
\end{align}
Thus the eigenvalues of $P_{t}^{\Omega}$ are given by $e^{\mu_{n} t}$ for $\mu_{n} \in \sigma_{pp} ( \mathcal{L}_{\Omega} )$. By the theory of Dirichlet forms \cite{FukushimaOshimaTakedaBook2011}, the operator $\mathcal{L}_{\Omega}$ is non-positive definite and hence we can write  $\mu_{n} = - \lambda_{n}$, where $\lambda_{n} \rightarrow \infty$ as $n\rightarrow \infty$, which completes the proof of (1).

(2) Let us now prove that $\lambda_{1}>0$. Assume that $\lambda_{1}=0$, then by the spectral mapping theorem \eqref{eqn.spec.mapp.thm} we have that $1\in \sigma_{pp} \left( P_{t}^{\Omega}\right)$ for all $t>0$, and hence
\begin{align*}
1\leqslant \Vert P_{t}^{\Omega} \Vert,
\end{align*}
since $\sigma_{pp} (P_{t}^{\Omega}  )  \subset \{ z\in \mathbb{C}, \; \vert z \vert \leqslant \Vert P^{\Omega}_{t} \Vert \}$. By \eqref{eqn.bound.heat.kernel} we have that
\begin{align*}
& 1 \leqslant \Vert P^{\Omega}_{t} \Vert^{2} \leqslant \Vert p^{\Omega}_{t} \Vert^{2}_{L^{2} (\Omega \times \Omega)} \leqslant A^{2} \vert \Omega \vert^{2} t^{-Q},
\end{align*}
for some finite constant $A>0$. Thus, $t^{Q} \leqslant  A^{2} \vert \Omega \vert^{2}$ for any $t>0$, which is a contradiction.

(3) Let $-\lambda_{n} \in \sigma_{pp} ( \mathcal{L}_{\Omega} )$, and let $\{ \phi_{n,k} \}_{n,k \in \mathbb{N}}$ be an orthonormal basis of $\ker \left( -\lambda_{n}  - \mathcal{L}_{\Omega} \right)$. By \cite[Corollary 6.4]{ArendtGraboschGreinerMoustakasNagelNeubranderSchlotterbeck1986} it follows that
\begin{align*}
& \overline{\operatorname{Span}\{ \phi_{n,k}^{(t)}, k \in \mathbb{N} \}} = \ker \left( e^{-\lambda_{n} t}-P^{\Omega}_{t} \right)
\\
& = \overline{\operatorname{Span} \left\{ \ker \left( -\lambda_{n}  + \frac{2 \pi j}{t} i - \mathcal{L}_{\Omega} \right), j \in \mathbb{Z}\right\}}
\end{align*}
for any  $t>0$.
The point spectrum of  $\mathcal{L}_{\Omega}$ is real since $\mathcal{L}_{\Omega}$ is self-adjoint, and hence
\[
\ker \left( -\lambda_{n}  + \frac{2 \pi j}{t} i - \mathcal{L}_{\Omega} \right)= \{ 0 \}
\]
for all $j\not= 0$. Thus we have that
\begin{align}
& \overline{ \operatorname{Span}\{ \phi_{n,k}^{(t)}, k\in\mathbb{N} \}} = \ker \left( e^{-\lambda_{n} t}-P^{\Omega}_{t} \right)   \notag
\\
& = \ker \left( -\lambda_{n}   - \mathcal{L}_{\Omega} \right)= \overline{ \operatorname{Span} \{ \phi_{n,k},  k\in\mathbb{N}\}}, \text{ for all }  n\in \mathbb{N}. \notag
\end{align}
The operator $P^{\Omega}_{t}$ is compact, and thus for every $n\in\mathbb{N}$ the eigenspace $\ker \left( e^{-\lambda_{n} t}  - P^{\Omega}_{t} \right)$ is finite-dimensional. Therefore for every $n\in \mathbb{N}$ there exists an $M_{n}$ such that for every $t>0$

\begin{equation}\label{eqn.orth.bas2}
\ker \left( e^{-\lambda_{n} t}  - P^{\Omega}_{t} \right)  = \operatorname{Span} \{ \phi_{n,k}, k=1,\ldots, M_{n} \}.
\end{equation}
We proved that for every $n\in \mathbb{N}$ there exists an orthonormal basis $\{ \phi_{n,k} \}_{k=1}^{M_{n}}$ of $\ker \left( e^{-\lambda_{n} t}  - P^{\Omega}_{t} \right)$ for any $t>0$ such that for any $k=1,\ldots,  M_{n}$
\begin{align}
& P_{t}^{\Omega} \phi_{n,k} = e^{-\lambda_{n} t} \phi_{n,k},   \notag
\\
& \mathcal{L}_{\Omega} \phi_{n,k} = -\lambda_{n} \phi_{n,k} . \notag
\end{align}
By \eqref{eqn.orth.bas} and \eqref{eqn.orth.bas2} it follows that
\begin{align}\label{eqn.decomposition}
& L^{2}(\Omega, dx) =\bigoplus_{n=1}^{\infty} \operatorname{Span} \{ \phi_{n,k}, k=1,\ldots,M_{n} \},
\end{align}
and hence $\{\phi_{n,k} \}_{k=1, n=1}^{M_{n}, \infty}$ is the desired orthonormal basis of $L^{2}(\Omega, dx)$.
\end{proof}

\begin{notation}\label{notation.basis}
Throughout the paper instead of using the orthonormal basis $\{\phi_{n,k} \}_{k=1, n=1}^{M_{n}, \infty}$ given in the proof of Theorem \ref{thm.spectral.results}, we denote by $\{\phi_{n}\}_{n=1}^{\infty}$ the same orthonormal basis, where for each repeated eigenvalue $\lambda_{n}$ we index the corresponding eigenfunctions consequently according to its (finite) multiplicity $M_{n}$. In particular,  we have that

\begin{equation}\label{eqn.decomp.semigroup}
P^{\Omega}_{t} f = \sum_{n=1}^{\infty} e^{-\lambda_{n} t} \sum_{k=1}^{M_{n}} \langle f, \phi_{n,k} \rangle \phi_{n,k}= \sum_{n=1}^{\infty} e^{-\lambda_{n} t} \langle f, \phi_{n} \rangle \phi_{n}
\end{equation}
for every $f \in L^{2}(\Omega, dx)$.
\end{notation}
We next prove regularity properties of the eigenfunctions of $-\mathcal{L}_{\Omega}$.

\begin{proposition}\label{prop.efunctions.properties}
Let $\Omega$ be a bounded open connected subset of $\mathbb{G}$, and $\{ \phi_{n} \}_{n=1}^{\infty}$ be the eigensystem of $-\mathcal{L}_{\Omega}$ with eigenvalues $\{ \lambda_{n} \}_{n=1}^{\infty}$. Then
\begin{enumerate}
\item There exists a constant $d(\Omega)$ such that for any $1\leqslant p \leqslant \infty$,
\[
\Vert \phi_{n} \Vert_{L^{p} (\Omega, dx)} \leqslant d(\Omega) \lambda_{n}^{\frac{Q}{2}},
\]
where $Q$ is the homogeneous dimension of $\mathbb{G}$.

\item For every $n \in \mathbb{N}$, the function $\phi_{n}$ is continuous in $\Omega$.

\item The series
\begin{align*}
\sum_{n=1}^{\infty} e^{-\lambda_{n} t} \phi_{n} (x) \phi_{n}(y),
\end{align*}
converges uniformly on $\Omega \times \Omega \times [\varepsilon, \infty)$, for any $\varepsilon>0$.

\item If $\Omega$ is regular in the sense of Definition \ref{d.regular.set.prob}, then for every $n\in \mathbb{N}$ and $z\in \partial \Omega$
\begin{equation}\label{eqn.boundary}
\lim_{x \rightarrow z}  \phi_{n} (x)  =0.
\end{equation}

\end{enumerate}
\end{proposition}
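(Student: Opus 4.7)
The plan is to deduce all four assertions from the spectral identity
\[
\phi_n(x) = e^{\lambda_n t}\, P^\Omega_t \phi_n(x) = e^{\lambda_n t}\!\int_\Omega p^\Omega_t(x,y)\,\phi_n(y)\,dy, \qquad t>0,
\]
combined with the global heat kernel bound $p^\Omega_t(x,y) \leqslant p_t(x,y) \leqslant A\, t^{-Q/2}$ from \eqref{eqn.bound.heat.kernel}, the H\"older continuity of $p^\Omega_t$ on $\Omega \times \Omega$ noted after \eqref{eqn.semigroup.restricted}, and the probabilistic representation $P^\Omega_t\phi_n(x)=\E^x[\phi_n(g_t); t<\tau_\Omega]$ from \eqref{eqn.semigroup.restricted}.

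For (1), Cauchy--Schwarz and $\|\phi_n\|_{L^2}=1$ give
\[
|\phi_n(x)| \leqslant e^{\lambda_n t}\,\|p^\Omega_t(x,\cdot)\|_{L^\infty(\Omega)}\, \|\phi_n\|_{L^1(\Omega)} \leqslant A\,|\Omega|^{1/2}\, e^{\lambda_n t}\, t^{-Q/2}.
\]
Optimising at $t = Q/(2\lambda_n)$ yields $\|\phi_n\|_{L^\infty(\Omega)} \leqslant d(\Omega)\,\lambda_n^{Q/2}$, and the $L^p$ bound follows from $\|\phi_n\|_{L^p(\Omega)} \leqslant |\Omega|^{1/p}\,\|\phi_n\|_{L^\infty}$. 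Assertion (2) is then the statement that integration of a continuous kernel against an $L^2$ function produces a continuous function: H\"older continuity of $p^\Omega_t(\cdot,y)$ together with dominated convergence (majorant $A\,t^{-Q/2}|\phi_n|$) shows that $P^\Omega_t \phi_n$, and hence $\phi_n$, is continuous on $\Omega$.

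For (3), part (1) gives
\[
\bigl|e^{-\lambda_n t}\phi_n(x)\phi_n(y)\bigr| \leqslant d(\Omega)^2\, \lambda_n^{Q}\, e^{-\lambda_n \varepsilon}
\]
uniformly on $\Omega\times\Omega\times[\varepsilon,\infty)$. Since $P^\Omega_{\varepsilon/2}$ is Hilbert--Schmidt by Theorem \ref{thm.spectral.results}, its square $P^\Omega_{\varepsilon}$ is trace class, so $\sum_n e^{-\lambda_n \varepsilon/2}<\infty$; combined with $\lambda_n^{Q} e^{-\lambda_n \varepsilon/2}\to 0$ as $n\to\infty$ (since $\lambda_n\to\infty$), this forces $\sum_n \lambda_n^{Q} e^{-\lambda_n \varepsilon}<\infty$, and the Weierstrass M-test delivers the uniform convergence.

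Finally for (4), fix $t>0$. The probabilistic formula yields $|\phi_n(x)| \leqslant e^{\lambda_n t}\,\|\phi_n\|_\infty\, \Prob^x(\tau_\Omega > t)$. Regularity of $z\in\partial\Omega$ asserts $\Prob^z(\tau_\Omega=0)=1$, and by the classical upper semicontinuity of $x\mapsto \Prob^x(\tau_\Omega > t)$ for Feller diffusions---equivalently, the analytic characterization of regular points to be established in Section \ref{sec.regularpts}---we have $\Prob^x(\tau_\Omega > t) \to 0$ as $x\to z$, which together with the finite $L^\infty$ bound from (1) yields \eqref{eqn.boundary}. The main obstacle is precisely this last step: reducing the boundary vanishing of eigenfunctions to the vanishing of the survival probability forces one to identify the probabilistic definition of regularity with an analytic notion sensitive to the hypoelliptic structure of $\mathcal{L}$, which is the technical content of Section \ref{sec.regularpts}.
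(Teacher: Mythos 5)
Your proposal is correct and runs on the same engine as the paper's proof: the on-diagonal heat kernel bound \eqref{eqn.bound.heat.kernel} gives ultracontractivity of $P^\Omega_t$, the eigenvalue relation $\phi_n=e^{\lambda_n t}P^\Omega_t\phi_n$ converts that into $L^p$ bounds and continuity, and $\Prob^x(\tau_\Omega>t)$ controls the boundary behavior. The one genuinely different step is part (1). You bound $\|\phi_n\|_{L^\infty}$ directly from $|\phi_n(x)|\leqslant e^{\lambda_n t}\|p^\Omega_t(x,\cdot)\|_{L^\infty}\|\phi_n\|_{L^1}$ and then dominate every $\|\phi_n\|_{L^p}$ on the finite-measure set $\Omega$ by $|\Omega|^{1/p}\|\phi_n\|_{L^\infty}$, whereas the paper first establishes $\|P^\Omega_t\|_{L^q\to L^2}\leqslant c(\Omega)t^{-Q/2}$ for all $q$, dualizes via self-adjointness to get $\|P^\Omega_t\|_{L^2\to L^p}\leqslant c(\Omega)t^{-Q/2}$, and only then specializes to eigenfunctions. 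Both arrive at $d(\Omega)\lambda_n^{Q/2}$ after optimizing at $t=Q/(2\lambda_n)$; yours is shorter, the paper's formulation records the full family of smoothing bounds. In part (3) you supply the summability of the Weierstrass majorant, $\sum_n\lambda_n^{Q}e^{-\lambda_n\varepsilon}<\infty$, via the Hilbert--Schmidt property of $P^\Omega_{s}$ (giving $\sum_n e^{-\lambda_n s}<\infty$ for every $s>0$) and a split of the exponential; the paper states this convergence without proof, so this is a useful completion, even though you should tighten the scaling bookkeeping (HS of $P^\Omega_{\varepsilon/2}$ gives $\sum_n e^{-\lambda_n\varepsilon}<\infty$, not $\sum_n e^{-\lambda_n\varepsilon/2}<\infty$; use $P^\Omega_{\varepsilon/4}$ for the latter).

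One remark on part (4) should be corrected. Your mechanism is exactly the paper's---$|\phi_n(x)|\leqslant e^{\lambda_n t}\|\phi_n\|_\infty\Prob^x(\tau_\Omega>t)$, upper semicontinuity of $x\mapsto\Prob^x(\tau_\Omega>t)$, and $\Prob^z(\tau_\Omega>t)\leqslant\Prob^z(\tau_\Omega>0)=0$ at a regular point---but the parenthetical ``equivalently, the analytic characterization of regular points to be established in Section \ref{sec.regularpts}'' and the closing claim that one is ``forced to identify'' the probabilistic and analytic notions are off. The proof of (4) uses only the probabilistic Definition \ref{d.regular.set.prob} and the upper semicontinuity of the exit-tail, which is a bare Markov/semigroup fact (the paper cites \cite{ChungWalshBook}); the analytic--probabilistic equivalence in Section \ref{sec.regularpts} is proved independently and is needed for recognizing concrete regular domains and for the applications, not for this proposition.
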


\begin{proof}
(1) First, note that $P^{\Omega}_{t}: L^{2}(\Omega, dx) \rightarrow L^{2}(\Omega, dx)$ is a self-adjoint operator since its generator $\mathcal{L}_{\Omega}$ is self-adjoint. Thus
\begin{equation}\label{eqn.1a}
(P^{\Omega}_{t})^{\ast} f = P^{\Omega}_{t} f, \, \text{for any}\, f\in L^{2}(\Omega, dx),
\end{equation}
where $(P^{\Omega}_{t})^{\ast}$ denotes the adjoint of $P^{\Omega}_{t}$. By \eqref{eqn.bound.heat.kernel} there exists a constant $A>0$ such that, for any $x,y\in \Omega$
\[
p^{\Omega}_{t} (x,y) \leqslant p_{t} (x,y) \leqslant  At^{-\frac{Q}{2}}.
\]
Therefore, for any $1\leqslant p \leqslant \infty$ and any $x\in \Omega$
\[
\Vert p_{t}^{\Omega} (x, \cdot ) \Vert^{p}_{L^{p}(\Omega)}= \int_{\Omega} p^{\Omega}_{t} (x,y)^{p} dy\leqslant \vert \Omega \vert A^{p} t^{-\frac{Q}{2}p},
\]
that is,
\[
\Vert p^{\Omega}_{t} (x,\cdot) \Vert_{L^{p}(\Omega)} \leqslant \vert \Omega \vert^{\frac{1}{p}} A t^{-\frac{Q}{2}}.
\]
Therefore for any $1\leqslant q\leqslant \infty$ and $f\in L^{q}(\Omega,dx)$
\begin{align*}
& \Vert P^{\Omega}_{t} f \Vert^{2}_{L^{2}(\Omega)} = \int_{\Omega} \left[ \int_{\Omega} f(y) p^{\Omega}_{t} (x,y)dy \right]^{2} dx
\\
& \leqslant \int_{\Omega}  \Vert f \Vert^{2}_{L^{q}(\Omega) } \Vert p^{\Omega}_{t} (x, \cdot ) \Vert^{2}_{L^{\frac{q}{q-1}}(\Omega)}dx\leqslant  \Vert f \Vert^{2}_{L^{q}(\Omega) } \vert \Omega\vert^{1+ \frac{2q-2}{q}} A^{2} t^{-Q}.
\end{align*}
Thus the operator $P^{\Omega}_{t} : L^{q}(\Omega,dx) \rightarrow  L^{2}(\Omega, dx)$ is well-defined for every $1\leqslant q \leqslant \infty$, and it satisfies
\begin{equation}\label{eqn.2a}
\Vert P^{\Omega}_{t} \Vert_{L^{q}(\Omega)\to L^{2}(\Omega)} \leqslant c(q,\Omega) t^{-\frac{Q}{2}},
\end{equation}
where $c(q,\Omega):= \vert \Omega \vert^{\frac{1}{2} + \frac{q-1}{q}}A \leqslant A \max (\vert \Omega \vert^{\frac{1}{2}}, \vert \Omega\vert^{\frac{3}{2}} ) =: c(\Omega)$. The adjoint $(P^{\Omega}_{t})^{\ast}  :  L^{2}(\Omega,dx) \rightarrow  L^{p}(\Omega,dx)$ then satisfies
\[
\Vert (P^{\Omega}_{t})^{\ast} \Vert_{L^{2}(\Omega)\to L^{p}(\Omega)} \leqslant c(\Omega) t^{-\frac{Q}{2}},
\]
where $p$ is the conjugate of $q$. Thus, by \eqref{eqn.1a}
\[
\Vert P^{\Omega}_{t} \Vert_{L^{2}(\Omega)\to L^{p}(\Omega)} \leqslant c(\Omega) t^{-\frac{Q}{2}},
\]
for any $1\leqslant p \leqslant \infty$.

Let $\phi_{n}$ be an eigenfunction for $P^{\Omega}_{t}$ with the eigenvalue $e^{-\lambda_{n} t}$, then it follows that
\begin{align*}
\Vert \phi_{n} \Vert_{L^{p} (\Omega) }\leqslant c(\Omega) t^{-\frac{Q}{2}} e^{\lambda_{n} t} \Vert \phi_{n} \Vert_{ L^{2}(\Omega)},
\end{align*}
and taking the infimum over $t>0$ we see that
\begin{align*}
\Vert \phi_{n} \Vert_{L^{p} (\Omega) }\leqslant c(\Omega) \left(  \frac{2e}{Q}\right)^{\frac{Q}{2}} \Vert \phi_{n} \Vert_{ L^{2}(\Omega)} \lambda_{n}^{\frac{Q}{2}}.
\end{align*}
(2) Note that, for any $f\in L^{2} (\Omega, dx)$, the function
\begin{align*}
x \longrightarrow \int_{\Omega} f(y) p^{\Omega}_{t} (x,y) dy
\end{align*}
is continuous in $x$ since $p_{t}^{\Omega}(x,y)$ is continuous in $x$ and $y$ in $\Omega$. Then
\begin{align*}
\phi_{n}  (x) = e^{\lambda_{n} t} (P^{\Omega}_{t} \phi_{n} ) (x)=e^{\lambda_{n} t} \int_{\Omega} \phi_{n}(y) p^{\Omega}_{t} (x,y) dy
\end{align*}
is continuous  for any $x \in \Omega$.

(3) Let $\varepsilon>0$. Then for every $x,y\in \Omega$, $t\geqslant \varepsilon$ we have that
\begin{align*}
& \sum_{n=1}^{\infty} e^{-\lambda_{n} t} \vert \phi_{n} (x) \phi_{n}(y) \vert \leqslant\sum_{n=1}^{\infty} e^{-\lambda_{n} t} \Vert \phi_{n} \Vert_{  L^{\infty} (\Omega, dx ) }^{2}
\\
& \leqslant d(\Omega)^{2}\sum_{n=1}^{\infty} e^{-\lambda_{n} t}\lambda_{n}^{Q} \leqslant  d(\Omega)^{2} \sum_{n=1}^{\infty}   e^{-\lambda_{n} \varepsilon} \lambda_{n}^{Q},
\end{align*}
which is convergent.

(4) For any $t>0$,  $n \in \mathbb{N}$, and $x\in \Omega$  we have that
\begin{align*}
& \vert \phi_{n} (x) \vert = e^{\lambda_{n}t} \left| \int_{\Omega} p^{\Omega}_{t} (x,y) \phi_{n} (y)dy \right| \leqslant e^{\lambda_{n}t} \Vert \phi_{n} \Vert_{L^{\infty} (\Omega, dx)} \int_{\Omega} p^{\Omega}_{t} (x,y) dy
\\
& =  e^{\lambda_{n}t} \Vert \phi_{n} \Vert_{L^{\infty} (\Omega, dx)} \Prob^{x} \left( \tau_{\Omega} >t \right).
\end{align*}
By \cite[Proposition 1, p. 163]{ChungWalshBook} we have that the function $x\rightarrow  \Prob^{x} \left( \tau_{\Omega} >t \right)$ is upper semi-continuous for any $x\in\mathbb{G}$. Though their proof is for a standard Brownian motion, it only relies on the semigroup property, an thus the argument applies in our setting. If $\Omega$ is regular, then for any $z\in \partial \Omega$ we have that
\begin{align*}
& \lim_{x\rightarrow z} \vert \phi_{n} (x) \vert \leqslant e^{\lambda_{n}t}  \Vert \phi_{n} \Vert_{L^{\infty} (\Omega, dx)}        \limsup_{x\rightarrow z} \Prob^{x} \left( \tau_{\Omega} >t \right)
\\
& \leqslant e^{\lambda_{n}t}  \Vert \phi_{n} \Vert_{L^{\infty} (\Omega, dx)}         \Prob^{z} \left( \tau_{\Omega} >t \right)
\\
& \leqslant   e^{\lambda_{n}t}  \Vert \phi_{n} \Vert_{L^{\infty} (\Omega, dx)}         \Prob^{z} \left( \tau_{\Omega} >0 \right)  =0,
\end{align*}
\end{proof}

Regularity of $\Omega$ was only used in the proof of Part (4) of Proposition~\ref{prop.efunctions.properties} to check that the eigenfunctions vanish on $\partial \Omega$. For the rest of this section we do not assume regularity of the set $\Omega$.

\begin{corollary}\label{cor.important} Let $x, y \in \Omega$, and $t>0$, then
\begin{align*}
& p^{\Omega}_{t} (x,y) = \sum_{n=1}^{\infty} e^{-\lambda_{n} t}  \phi_{n} (x) \phi_{n}(y),
\\
& \Prob^{x} \left( \tau_{\Omega} >t \right) = \sum_{n=1}^{\infty} e^{-\lambda_{n} t}   c_{n} \phi_{n} (x),
\end{align*}
where $c_{n}:= \int_{\Omega} \phi_{n} (y) dy$.
\end{corollary}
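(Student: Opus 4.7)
The plan is to identify two representations of the kernel of $P_t^\Omega$: an integral one, given by $p_t^\Omega$, and a spectral one obtained from the orthonormal expansion in \eqref{eqn.decomp.semigroup}. For fixed $x \in \Omega$ and $t>0$, the bound \eqref{eqn.bound.heat.kernel} combined with Hunt's formula shows $p_{t/2}^\Omega(x,\cdot) \in L^2(\Omega, dx)$, so it admits a Fourier expansion in the basis $\{\phi_n\}$. The Fourier coefficient is
\[
\langle p_{t/2}^\Omega(x,\cdot), \phi_n\rangle = \int_\Omega p_{t/2}^\Omega(x,z)\phi_n(z)\,dz = (P_{t/2}^\Omega \phi_n)(x) = e^{-\lambda_n t/2}\phi_n(x),
\]
where I used self-adjointness of $P_{t/2}^\Omega$ (so that $p_{t/2}^\Omega(x,z) = p_{t/2}^\Omega(z,x)$) together with the eigenrelation from Theorem~\ref{thm.spectral.results}(3). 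Note that this identity needs the continuous version of $\phi_n$ given by Proposition~\ref{prop.efunctions.properties}(2) in order to speak of a pointwise value at $x$.

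Next I would combine the semigroup property $p_t^\Omega(x,y)=\int_\Omega p_{t/2}^\Omega(x,z)p_{t/2}^\Omega(z,y)\,dz$ with Parseval's identity applied to the two $L^2$ functions $p_{t/2}^\Omega(x,\cdot)$ and $p_{t/2}^\Omega(y,\cdot)$, yielding
\[
p_t^\Omega(x,y) = \sum_{n=1}^\infty e^{-\lambda_n t/2}\phi_n(x)\cdot e^{-\lambda_n t/2}\phi_n(y) = \sum_{n=1}^\infty e^{-\lambda_n t}\phi_n(x)\phi_n(y).
\]
A priori Parseval gives an $L^2$-sense identity in $y$ (for fixed $x$), but Proposition~\ref{prop.efunctions.properties}(3) guarantees that the series on the right converges uniformly on $\Omega\times\Omega$ for $t\geqslant \varepsilon>0$, and its sum is therefore a continuous function that agrees with $p_t^\Omega(x,y)$ almost everywhere; continuity of $p_t^\Omega$ then forces equality pointwise.

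For the second formula I would integrate the spectral expansion over $y\in\Omega$ using \eqref{eqn.exit.time}:
\[
\Prob^x(\tau_\Omega>t) = \int_\Omega p_t^\Omega(x,y)\,dy = \int_\Omega \sum_{n=1}^\infty e^{-\lambda_n t}\phi_n(x)\phi_n(y)\,dy.
\]
The uniform convergence from Proposition~\ref{prop.efunctions.properties}(3) permits the interchange of sum and integral, producing the claimed identity with $c_n = \int_\Omega \phi_n(y)\,dy$. The only subtlety I anticipate is ensuring that the spectral expansion of $p_t^\Omega(x,y)$ holds pointwise (not merely in $L^2$), so both formulas make sense at every $x,y\in\Omega$; this is precisely where Proposition~\ref{prop.efunctions.properties}(2)--(3) are essential, and no further regularity assumption on $\partial\Omega$ is needed.
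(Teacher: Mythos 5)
Your argument is correct and arrives at the right conclusion, but it takes a genuinely different route from the paper. You use a Mercer-type argument at the ``half time'': expand $p_{t/2}^\Omega(x,\cdot)$ in the eigenbasis, compute the Fourier coefficients via the eigenrelation, and then combine the Chapman--Kolmogorov identity with Parseval to produce the kernel expansion as a scalar identity at each fixed $(x,y)$. The paper instead tests $p_t^\Omega(x,\cdot)$ directly against an arbitrary $f\in L^2(\Omega)$: starting from $\int_\Omega f(y)p_t^\Omega(x,y)\,dy = P_t^\Omega f(x) = \sum_n e^{-\lambda_n t}\langle f,\phi_n\rangle\phi_n(x)$ and using the uniform convergence in Proposition~\ref{prop.efunctions.properties}(3) to pull the sum inside the integral, yielding $\int_\Omega f(y)\bigl[\sum_n e^{-\lambda_n t}\phi_n(x)\phi_n(y)\bigr]dy$ for all $f$, hence equality of the kernels a.e.\ and then everywhere by continuity. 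Your route has the modest advantage of producing pointwise equality directly from a scalar Parseval identity without the duality step (note that once you have set things up, Parseval already gives a number on both sides, so there is no residual ``$L^2$-sense in $y$'' to upgrade --- your extra caution at that step is harmless but unnecessary); the paper's route avoids invoking Chapman--Kolmogorov and the symmetry of the kernel explicitly. Both proofs crucially use the same ingredients: the spectral expansion \eqref{eqn.decomp.semigroup}, continuity of the eigenfunctions from Proposition~\ref{prop.efunctions.properties}(2), and the $L^\infty$-bound and uniform convergence from parts (1) and (3). The treatment of the second formula (integrating over $\Omega$ and interchanging by uniform convergence) is identical to the paper's.
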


\begin{proof}
By \eqref{eqn.decomp.semigroup} and Proposition \ref{prop.efunctions.properties} part (3) we have that
\begin{align*}
& \int_{\Omega} f(y) p^{\Omega}_{t} (x,y) dy= P^{\Omega}_{t} f (x) = \sum_{n=1}^{\infty} e^{-\lambda_{n}t} \langle f, \phi_{n} \rangle_{L^{2} (\Omega, dx )} \phi_{n}(x)
\\
& =  \sum_{n=1}^{\infty} e^{-\lambda_{n}t}  \int_{\Omega} f(y) \phi_{n} (y) dy\phi_{n}(x) =  \int_{\Omega} f(y)  \sum_{n=1}^{\infty}  e^{-\lambda_{n}t}   \phi_{n} (y) \phi_{n}(x) dy,
\end{align*}
for any $f \in L^{2} (\Omega, dx )$, and hence  $p^{\Omega}_{t} (x,y) = \sum_{n=1}^{\infty} e^{-\lambda_{n} t}  \phi_{n} (x) \phi_{n}(y)$. Then
\begin{align*}
&  \Prob^{x} \left( \tau_{\Omega} >t \right)  = \int_{\Omega} p^{\Omega}_{t} (x,y) dy = \int_{\Omega} \sum_{n=1}^{\infty} e^{-\lambda_{n} t}  \phi_{n} (x) \phi_{n}(y) dy
\\
& = \sum_{n=1}^{\infty} e^{-\lambda_{n} t}   \phi_{n} (x) \int_{\Omega}  \phi_{n}(y) dy = \sum_{n=1}^{\infty} e^{-\lambda_{n} t} c_{n} \phi_{n} (x) ,
\end{align*}
where $c_{n} := \int_{\Omega} \phi_{n} (y)dy$ is finite by Proposition \ref{prop.efunctions.properties} part (1).
\end{proof}

Our next goal is to prove that the first eigenvalue $\lambda_{1}$ is a simple eigenvalue for $- \mathcal{L}_{\Omega}$ or equivalently, by Theorem \ref{thm.spectral.results}, that $e^{-\lambda_{1} t}$ is a simple eigenvalue for $P^{\Omega}_{t}$. This will follow from the irreducibility of the semigroup $P^{\Omega}_{t}$. The definition of irreducibility of Dirichlet forms and corresponding semigroups can be found in \cite[p. 55]{FukushimaOshimaTakedaBook2011}. For a definition of irreducible semigroups on Banach lattices we refer to \cite[Section 14.3]{BatkaiKramarRhandiBook2017}. We will use the following characterization of irreducible semigroups \cite[Example 14.11]{BatkaiKramarRhandiBook2017}. Let $T_{t}$ be a strongly continuous semigroup on $L^{p} (\Omega, dx)$, $1\leqslant p < \infty$ with generator $A$. Let $s(A):= \sup \{ \text{Re} (\lambda), \lambda \in \sigma(A) \}$ and $R_{\mu} = (A-\mu )^{-1}$ for $\mu$ in the resolvent set of $A$.

\begin{lemma}[Example 14.11 in \cite{BatkaiKramarRhandiBook2017}]\label{lemma.irredu}
The semigroup $T_{t}$ is irreducible if and only if for any positive $f \in  L^{p} (\Omega, dx)$ we have that
\begin{align*}
R_{\mu}f (x) >0, \text{ for a.e. }  x\in \Omega \text{ and some }  \mu >s(A).
\end{align*}
\end{lemma}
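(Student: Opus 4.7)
The plan is to combine two classical facts via a direct contradiction argument. The first ingredient is the Laplace transform representation of the resolvent: for $\mu > s(A)$, one has
\begin{equation*}
(\mu I - A)^{-1} f \;=\; \int_0^\infty e^{-\mu t}\, T_t f \, dt
\end{equation*}
as a Bochner integral in $L^p(\Omega, dx)$. Up to the overall sign dictated by the paper's convention $R_\mu = (A - \mu I)^{-1}$, this identity shows that $R_\mu$ preserves positivity whenever $T_t$ does (and in the setting of the paper, the semigroup is positive by Beurling--Deny). The second ingredient is the standard identification of closed order ideals in $L^p(\Omega, dx)$ as the sets $J_B := \{f : f = 0 \text{ a.e.\ on } B\}$ indexed by measurable $B \subset \Omega$. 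Equivalently, $T_t$ is irreducible iff for every $f \in L^p(\Omega,dx)$ with $f \geqslant 0$, $f \not\equiv 0$, and every $\phi \in L^q(\Omega,dx)$ with $\phi \geqslant 0$, $\phi \not\equiv 0$ (where $1/p + 1/q = 1$), there exists $t > 0$ with $\langle T_t f, \phi\rangle > 0$.

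For the forward implication, I would fix $f \geqslant 0$, $f \not\equiv 0$, and argue by contradiction, supposing $B := \{x \in \Omega : R_\mu f(x) = 0\}$ has positive measure. Testing against $\phi := \mathbbm{1}_B$ and applying Fubini yields
\begin{equation*}
0 \;=\; \langle R_\mu f, \mathbbm{1}_B \rangle \;=\; \int_0^\infty e^{-\mu t}\, \langle T_t f, \mathbbm{1}_B\rangle \, dt,
\end{equation*}
and since $\langle T_t f, \mathbbm{1}_B\rangle \geqslant 0$ pointwise in $t$, the integrand vanishes for a.e.\ $t > 0$. Strong continuity of $t \mapsto T_t f$ in $L^p(\Omega, dx)$ then upgrades this to \emph{every} $t \geqslant 0$, contradicting the ideal characterization of irreducibility recalled above.

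For the converse, I would assume the resolvent positivity condition and suppose, for contradiction, that $T_t$ is not irreducible. Then there exist $f \geqslant 0$, $f \not\equiv 0$ in $L^p$ and $\phi \geqslant 0$, $\phi \not\equiv 0$ in $L^q$ with $\langle T_t f, \phi\rangle = 0$ for all $t \geqslant 0$. Multiplying by $e^{-\mu t}$ and integrating gives $\langle R_\mu f, \phi\rangle = 0$; since $R_\mu f \geqslant 0$ and $\phi$ is strictly positive on a set of positive measure, this forces $R_\mu f$ to vanish on a set of positive measure, contradicting the hypothesis.

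The main obstacle I anticipate is bridging the ``a.e.\ in $t$'' output of Fubini with the ``every $t \geqslant 0$'' input required by the ideal/invariance characterization of irreducibility; strong continuity of $t \mapsto T_t f$ in $L^p$ is precisely what closes this gap, and must be invoked explicitly (it holds in the paper's context because $(\mathcal{E}, \mathcal{D}_{\mathcal{E}}(\Omega))$ is a regular Dirichlet form). A secondary subtlety is the identification of closed order ideals in $L^p$ with sets of the form $J_B$, which is classical in the Banach lattice literature and should be cited rather than reproved.
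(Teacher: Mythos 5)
The paper does not actually prove this Lemma; it is cited verbatim from Example 14.11 of B\'{a}tkai--Kramar Fijav\v{z}--Rhandi, so there is no in-paper argument to compare against. Your proof supplies the missing details and is, in substance, the standard Banach-lattice argument: identify closed $T$-invariant ideals of $L^{p}(\Omega,dx)$ with the sets $J_{B}$, write the resolvent as the Laplace transform of the semigroup, pair against indicator functions, and use strong continuity of $t \mapsto T_{t}f$ to upgrade the Fubini-level conclusion from ``for a.e.\ $t$'' to ``for every $t \geqslant 0$.'' Both implications are correct as you have written them. Two points are worth recording explicitly rather than leaving implicit. First, the paper's convention $R_{\mu}=(A-\mu)^{-1}$ makes the resolvent a \emph{negative} operator for a dissipative generator, so the inequality $R_{\mu}f>0$ in the statement is only coherent under the usual convention $R_{\mu}=(\mu-A)^{-1}$; you flag this, but it is a genuine sign typo in the paper (the same sign choice reappears in the displayed resolvent formula inside the proof of Theorem~\ref{rmk.irreduc}). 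Second, the Bochner integral $\int_{0}^{\infty}e^{-\mu t}T_{t}\,dt$ converges a priori for $\mu$ above the growth bound $\omega_{0}$, not merely for $\mu>s(A)$; the identity $s(A)=\omega_{0}$ for positive $C_{0}$-semigroups on $L^{p}(\Omega,dx)$ with $1\leqslant p<\infty$ is Weis' theorem and should be invoked (or, more economically, one can simply state the Lemma with ``some $\mu>\omega_{0}$,'' which is all the paper uses). Neither point is a gap in your reasoning; they are bookkeeping items that a careful write-up should make explicit.
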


\begin{theorem}\label{rmk.irreduc}
The semigroup $P^{\Omega}_{t}$ is irreducible.
\end{theorem}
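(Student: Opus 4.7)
The plan is to verify the criterion in Lemma \ref{lemma.irredu} applied to the generator $A = \mathcal{L}_{\Omega}$. For any $\mu > s(\mathcal{L}_{\Omega}) = -\lambda_{1}$, the resolvent admits the Laplace transform representation
\[
-R_{\mu} f(x) = \int_{0}^{\infty} e^{-\mu t} P^{\Omega}_{t} f(x)\, dt = \int_{0}^{\infty} e^{-\mu t} \int_{\Omega} p^{\Omega}_{t}(x,y) f(y)\, dy\, dt.
\]
Hence for any nonnegative $f \in L^{p}(\Omega, dx)$ that does not vanish a.e., it suffices to prove the pointwise positivity
\[
p^{\Omega}_{t}(x,y) > 0 \quad \text{for all } x, y \in \Omega \text{ and all } t > 0,
\]
which will imply $R_{\mu} f(x) \neq 0$ (with definite sign) for every $x \in \Omega$.

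To prove this strict positivity, I would proceed as follows. First, I would note that $p^{\Omega}_{t}(x, \cdot)$ is a nonnegative, continuous solution of the parabolic equation associated with $\mathcal{L}$ on the open cylinder $(0, \infty) \times \Omega$, by standard hypoellipticity and Hunt's formula \eqref{eqn.Dirichlet.HK}. Second, I would invoke the parabolic Harnack inequality that holds in our setting by \cite{Sturm1996a} (as used already in the proof of Theorem \ref{thm.spectral.results}), which is applicable since volume doubling and the Poincaré inequality are available via \cite[Theorem 3.2]{DriverGrossSaloff-Coste2009a}. If $p^{\Omega}_{t_0}(x, y_0)$ vanished at some interior point $(t_{0}, y_{0}) \in (0, \infty) \times \Omega$, the Harnack inequality would force vanishing on a whole cylinder around $(t_{0}, y_{0})$, and iterating along a chain of overlapping balls contained in the open connected set $\Omega$ would propagate the vanishing to all of $\Omega$ for some time interval.

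This conclusion would contradict the fact that $\int_{\Omega} p^{\Omega}_{t}(x,y)\, dy = \mathbb{P}^{x}(\tau_{\Omega} > t)$, which is strictly positive for every $x \in \Omega$ and $t > 0$. To justify this latter point I would use the strict positivity of the unrestricted heat kernel $p_{t}(x,y) > 0$ on $\mathbb{G}$, continuity of sample paths of $g_{t}$, and the fact that $x$ belongs to the open set $\Omega$, so that for small $t > 0$ the probability of remaining in $\Omega$ is strictly positive, and then for all $t > 0$ by the semigroup property.

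I expect the main technical obstacle to be the verification that the parabolic Harnack inequality from \cite{Sturm1996a} applies to the Dirichlet heat kernel $p^{\Omega}_{t}$ (rather than the global kernel $p_{t}$) on interior subdomains of $\Omega$, together with the connectedness propagation argument. An alternative route, avoiding Harnack, would be to use the strong Markov property and the strict positivity of $p_{t}$ directly: connecting $x$ to $y$ by a chain $x = x_{0}, x_{1}, \ldots, x_{n} = y$ of points with overlapping balls all contained in the open connected set $\Omega$, and applying iteratively the fact that the probability of reaching the next ball in time $t/n$ without exiting $\Omega$ is strictly positive. Either route yields the desired positivity, whence irreducibility through Lemma \ref{lemma.irredu}.
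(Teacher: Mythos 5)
Your proposal and the paper's proof share the same skeleton: reduce to the strict positivity $p^{\Omega}_{t}(x,y)>0$ and then conclude via Lemma~\ref{lemma.irredu}. Where you diverge is in the key step establishing that positivity. The paper proves a quantitative local estimate $p_{t}(x,y)-p_{s}(z,y)>0$ for $x$ near $y$, $z\in\partial\Omega$ and $s<t<t_{0}$, using the two-sided Gaussian bounds from \cite[Eqs.\ (3.7)--(3.8)]{DriverGrossSaloff-Coste2009a}, and then plugs this into Hunt's formula \eqref{eqn.Dirichlet.HK} to conclude $p_{t}^{\Omega}(x,y)>0$ locally, finishing with a chaining argument over the connected set $\Omega$. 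You instead invoke the parabolic Harnack inequality of \cite{Sturm1996a} for nonnegative local solutions: if $p^{\Omega}_{t_{0}}(x,\cdot)$ vanished at an interior point, Harnack plus connectedness would force it to vanish on all of $\Omega$, contradicting $\int_{\Omega} p^{\Omega}_{t}(x,y)\,dy = \Prob^{x}(\tau_{\Omega}>t)>0$; and you also offer a probabilistic alternative via the strong Markov property and ball-chaining. Both routes are legitimate and rely on the same background input (volume doubling and Poincar\'e via \cite[Theorem 3.2]{DriverGrossSaloff-Coste2009a}). Your Harnack/contradiction argument is somewhat more robust and arguably cleaner, since it sidesteps the delicate parameter bookkeeping in the paper's comparison of $p_{t}(x,y)$ with $p_{s}(z,y)$; the paper's approach is more explicit and yields a concrete time threshold $t_{0}$. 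In either case the chaining step, which both of you leave as standard, does require a small amount of care because the parabolic Harnack inequality connects values at different times, so the chain must advance in time as well as space; and the base-case positivity of $\Prob^{x}(\tau_{\Omega}>t)$ for all $t>0$ (not just small $t$) does deserve the one-line Markov-property justification you sketch.
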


\begin{proof}
We first prove that
\begin{equation}\label{eqn.positivity}
p_{t}^{\Omega} (x,y)>0,
\end{equation}
for every $t>0$ and $x, y \in \Omega$. We claim that for every $y\in \Omega$ and $r$ small enough, there exists a time $t_{0}$ such that for any $x\in B_{r} (y), \; z\in \partial \Omega$ and $s<t< t_{0}$ one has that
\begin{align}\label{eqn.kinda.there}
p_{t}(x,y) - p_{s} (z,y) >0.
\end{align}
Indeed, if we assume \eqref{eqn.kinda.there} then
\begin{align*}
& p^{\Omega}_{t}(x,y):= p_{t}(x,y) - \E^x \left[ \mathbbm{1}_{\{ \tau_{\Omega} < t
\} } \,  p_{ t- \tau_{\Omega}} \left( g_{\tau_{\Omega}}, y\right) \right]
\\
& \geqslant p_{t}(x,y) - \E^x \left[  p_{ t- \tau_{\Omega}} \left( g_{\tau_{\Omega}}, y\right) \right] = \E^x \left[ p_{t}(x,y) - p_{ t- \tau_{\Omega}} \left( g_{\tau_{\Omega}}, y\right) \right] >0
\end{align*}
for any $t<t_{0}$ and $x\in B_{r}(y)$. The result would then follow for any $x\in \Omega$ by a standard chaining argument.  Let us now prove \eqref{eqn.kinda.there}. By \cite[Equation  (3.7)]{DriverGrossSaloff-Coste2009a}, for any $k\in (0,1)$ there exists a $c_{k} \in (0,\infty )$ such that
\begin{align*}
p_{s} (z,y) \leqslant c_{k} \left( 1+ \frac{1}{s}\right)^{\frac{\theta}{2}} e^{c_{k} s} e^{- k\frac{ d(z,y)^{2} }{s} } \leqslant c_{k} \left( 1+ \frac{1}{s}\right)^{\frac{\theta}{2}} e^{c_{k} s} e^{-k \frac{ d(y, \partial\Omega)^{2} }{s} },
\end{align*}
where the last inequality follows from the fact that $d(y,\partial \Omega)\leqslant d (z,y)$ since $z\in \partial \Omega$. By \cite[Equation (3.8)]{DriverGrossSaloff-Coste2009a} there exist constants $c_{1}, c_{2} \in (0,\infty)$ such that
\begin{align*}
p_{t} (x,y) \geqslant c_{1} \left( 1+ \frac{1}{t}\right)^{\frac{\theta}{2}} e^{-c_{2} t} e^{- c_{2}\frac{ d(x,y)^{2} }{t} } ,
\end{align*}
where $\theta$ is an integer defined in \cite[eq. (3.4)]{DriverGrossSaloff-Coste2009a}. For the sake of conciseness set
\begin{align*}
& u:= 1+ \frac{1}{t}, \qquad  v:= 1+ \frac{1}{s},
\\
& \alpha := \frac{\theta}{2}, \qquad \beta: = c_{2} d(x,y)^{2}, \qquad \gamma := k d(y, \partial\Omega)^{2}.
\end{align*}
Then
\begin{align*}
& p_{t} (x,y) - p_{s} (z,y) \geqslant c_{1} u^{\alpha} e^{-c_{2} t} e^{-\frac{\beta}{t}} - c_{k} v^{\alpha} e^{c_{k} s} e^{- \frac{\gamma}{s} }
\\
& = c_{k} v^{\alpha} e^{c_{k} s} e^{- \frac{\gamma}{s} } \left( \frac{c_{1}}{c_{k}}  \frac{u^{\alpha}}{v^{\alpha}} \frac{e^{-c_{2}t}}{e^{c_{k}s}} \frac{ e^{- \frac{\beta}{t} } }{  e^{- \frac{\gamma}{s} }}    -1 \right),
\end{align*}
and hence it is enough to prove that
\begin{align*}
  \frac{u^{\alpha}}{v^{\alpha}} \frac{ e^{- \frac{\beta}{t} } }{  e^{- \frac{\gamma}{s} }}    > \frac{c_{k} e^{c_{k} s}}{c_{1} e^{-c_{2} t}},
\end{align*}
for all $0<s<t$ small enough, that is,
\begin{align}\label{eqn.parameters}
  \frac{u^{\alpha}}{v^{\alpha}} e^{- \beta u + \gamma v + \beta - \gamma }> \frac{c_{k} e^{c_{k} s}}{c_{1} e^{-c_{2} t}},
\end{align}
for all $0<s<t$ small enough. If we let $F(v):=  \frac{u^{\alpha}}{v^{\alpha}} e^{- \beta u + \gamma v + \beta - \gamma }$ for $v>u>0$ and $u$ fixed, then
\begin{align*}
F^{\prime}(v) = F(v) \left( \gamma - \frac{\alpha}{v} \right)>0
\end{align*}
for $v$ large enough, and
\begin{align*}
F(u) = e^{ \frac{\gamma - \beta }{t}},
\end{align*}
Thus, if we choose $r$ small enough so that $\gamma - \beta >0$, then  we can find a $t_{0} = t_{0} (x,y, c_{1}, c_{k}, \Omega)$ such that \eqref{eqn.parameters} is satisfied and the proof of  \eqref{eqn.positivity} is complete.

We can now prove the irreducibility of $P^{\Omega}_{t}$. We can use \cite[Exercise 1.3.1]{FukushimaOshimaTakedaBook2011} to express the resolvent $R_{\mu}$ in terms of the semigroup $P^{\Omega}$. Then, by Lemma \ref{lemma.irredu} and \eqref{eqn.positivity} and for any $\mu \in \R$, for any $f>0$, and for a.e. $x \in \Omega$ we have that
\begin{align*}
R_{\mu}f (x) = \int_{0}^{\infty} e^{-\mu t} (P^{\Omega}_{t} f)(x)dx=0
\end{align*}
if and only if  $(P^{\Omega}_{t} f)(x)=0$ for a.e. $t>0$, since $ P^{\Omega}_{t}$ is a positive operator. Thus, $R_{\mu}f (x) =0$ if and only if
\begin{align*}
\int_{\Omega} f(y) p^{\Omega}_{t} (x,y) dy=0,
\end{align*}
that is, if and only if for a.~e. $x\in \Omega$, $p^{\Omega}_{t} (x,y)$ is zero on a set of positive Haar measure, which is not possible by \eqref{eqn.positivity}.
\end{proof}

\begin{theorem}\label{thm.simple.evalue}
Let $\lambda_{1}$ be the first non-zero eigenvalue of $-\mathcal{L}_{\Omega}$. Then $\lambda_{1}$ is a simple eigenvalue and there exists a corresponding eigenfunction $\phi$ such that $\phi (x) >0$ for every $x\in \Omega$.
\end{theorem}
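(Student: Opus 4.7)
The plan is to deduce both simplicity and strict positivity from the Krein--Rutman theorem applied to the semigroup $P^{\Omega}_{t}$ rather than directly to the (unbounded) generator $-\mathcal{L}_{\Omega}$, and then transfer the conclusion back to $-\mathcal{L}_{\Omega}$ using the correspondence between eigenspaces established in the proof of Theorem \ref{thm.spectral.results}(3).

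First I would verify the three ingredients needed for the Krein--Rutman theorem for positive compact operators on the Banach lattice $L^{2}(\Omega, dx)$. Compactness of $P^{\Omega}_{t}$ is part (1) of Theorem \ref{thm.spectral.results} (in fact Hilbert--Schmidt). Positivity is immediate from the representation $P^{\Omega}_{t} f(x) = \int_{\Omega} p^{\Omega}_{t}(x,y) f(y)\, dy$ together with $p^{\Omega}_{t} \geqslant 0$. Irreducibility was proved in Theorem \ref{rmk.irreduc}. Finally, the spectral radius of $P^{\Omega}_{t}$ equals $e^{-\lambda_{1} t}$, since the spectrum consists of $\{ e^{-\lambda_{n} t}\}_{n \in \mathbb{N}}$ with $\lambda_{1} \leqslant \lambda_{2} \leqslant \cdots$ by Theorem \ref{thm.spectral.results}(1), and $\lambda_{1} > 0$ by (2) of the same theorem.

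Next I would invoke the Krein--Rutman theorem for irreducible positive compact operators (see e.g. \cite[Section 14.3]{BatkaiKramarRhandiBook2017}) to conclude that $e^{-\lambda_{1} t}$ is a simple eigenvalue of $P^{\Omega}_{t}$, and that there is a corresponding eigenfunction $\phi \in L^{2}(\Omega, dx)$ with $\phi \geqslant 0$ a.e.\ (in fact strictly positive a.e.). Simplicity of $e^{-\lambda_{1} t}$ as an eigenvalue of $P^{\Omega}_{t}$ yields simplicity of $\lambda_{1}$ as an eigenvalue of $-\mathcal{L}_{\Omega}$: indeed, as shown in the proof of Theorem \ref{thm.spectral.results}(3),
\[
\ker\!\left( e^{-\lambda_{1} t} - P^{\Omega}_{t} \right) = \ker\!\left( -\lambda_{1} - \mathcal{L}_{\Omega} \right),
\]
so the two eigenspaces have the same (finite) dimension, and simplicity of one forces simplicity of the other.

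For pointwise strict positivity, I would use that $\phi$ is continuous on $\Omega$ by Proposition \ref{prop.efunctions.properties}(2), so the a.e.\ inequality $\phi \geqslant 0$ upgrades to $\phi \geqslant 0$ everywhere on $\Omega$, and $\phi \not\equiv 0$. Then, for any fixed $t>0$ and any $x \in \Omega$,
\[
\phi(x) = e^{\lambda_{1} t} (P^{\Omega}_{t}\phi)(x) = e^{\lambda_{1} t} \int_{\Omega} p^{\Omega}_{t}(x,y)\, \phi(y)\, dy > 0,
\]
since $p^{\Omega}_{t}(x,y) > 0$ for all $x,y \in \Omega$ by the strict positivity \eqref{eqn.positivity} established in the proof of Theorem \ref{rmk.irreduc}, and $\phi$ is nonnegative and not identically zero. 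The main potential obstacle is the bookkeeping between eigenvalues of the semigroup and of the generator, but this has already been handled in the proof of Theorem \ref{thm.spectral.results}(3), so the argument reduces to a clean application of Krein--Rutman plus the strict positivity of the Dirichlet heat kernel.
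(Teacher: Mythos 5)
Your proposal is correct and follows essentially the same route as the paper: Krein--Rutman applied to the compact positive operator $P^{\Omega}_{t}$, the eigenspace identification from the proof of Theorem~\ref{thm.spectral.results}(3), and the strict positivity of $p^{\Omega}_{t}$ from \eqref{eqn.positivity} to get pointwise positivity of $\phi$. The one cosmetic difference is that you invoke the stronger Krein--Rutman/de~Pagter statement for \emph{irreducible} compact positive operators to get simplicity in one stroke, whereas the paper first extracts a nonnegative eigenfunction from the basic Krein--Rutman theorem, proves $\phi>0$ everywhere by a contradiction argument using \eqref{eqn.positivity}, and only then appeals to irreducibility together with self-adjointness of $\mathcal{L}_{\Omega}$ (via \cite[Proposition~14.42(c)]{BatkaiKramarRhandiBook2017}) to conclude that the eigenspace is one-dimensional.
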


\begin{proof}
For every $t>0$ the operator $P^{\Omega}_{t}$ is compact with spectral radius given by $e^{-\lambda_{1} t}$, and $K:= \left\{ f\in L^{2} (\Omega, dx) f \geqslant 0  \text{ a.s.} \right\}$ is a cone in $L^{2} (\Omega, dx)$ such that $P^{\Omega}_{t} (K) \subset K$. Thus, by Krein-Rutman Theorem \cite{KreinRutman1948}, there exists an eigenfunction $\phi$ of  $P^{\Omega}_{t}$ with eigenvalue $e^{-\lambda_{1} t}$ such that $\phi \in K \backslash \{ 0\}$. By Theorem \ref{thm.spectral.results} we know that $\phi$ is an eigenfunction of $- \mathcal{L}_{\Omega}$ with eigenvalue $\lambda_{1}$. Let us assume that $\phi(x)=0$ for some $x\in \Omega$. Then
\begin{align*}
& 0= \phi(x) = e^{\lambda_{1} t} \int_{\Omega} \phi(y)p^{\Omega}_{t} (x,y)dy \geqslant 0,
\end{align*}
and hence $\phi(y)p^{\Omega}_{t} (x,y)=0$ for a.e. $y\in \Omega$. The set
\begin{align*}
A:= \{ z\in \Omega:  \phi(z) >0 \}
\end{align*}
has positive Haar measure since $\phi\in K \backslash \{0\}$. Thus, $p^{\Omega}_{t} (x,y)=0$ for almost every $y\in A$, which leads to a contradiction by \eqref{eqn.positivity}.

The semigroup $P^{\Omega}_{t}$ is irreducible by Theorem~\ref{rmk.irreduc}, and its generator $\mathcal{L}_{\Omega}$ is self-adjoint, and we proved that there exists $\phi \in \ker (- \lambda_{1} - \mathcal{L}_{\Omega} )$ such that $\phi>0$. Thus, by \cite[Proposition 14.42 (c)]{BatkaiKramarRhandiBook2017} it follows that $\operatorname{dim} \ker (- \lambda_{1} - \mathcal{L}_{\Omega} )=1$.
\end{proof}

\section{Regular boundary points: an analytic approach}\label{sec.regularpts}
In this section we compare the probabilistic notion of regular points in Definition \ref{d.regular.set.prob} with an analytic definition used for hypoelliptic operators. The main goal is to prove that these two notions are indeed equivalent.

Let $\mathcal{L}$ be a diffusion operator  and $\Omega$ be a bounded open connected subset of a homogeneous Carnot group $\mathbb{G}\cong \mathbb{R}^{N}$. Consider the boundary value problem

\[
\left\{
\begin{array}{ll}
\mathcal{L} u =0 & \text{ in } \Omega,
\\
u= \phi & \text{ in } \partial \Omega,
\end{array}
\right.
\]
where $\phi :\partial \Omega \longrightarrow \mathbb{R}$ is a continuous function. If $\Omega$ is an open set with compact closure and nonempty boundary, then there exists a generalized solution $H^{\Omega}_{\phi}$ in the sense of Perron–Wiener–Brelot, which in this setting is described in \cite[II.6.7, p.359]{BonfiglioliLanconelliUguzzoniBook}. We now recall an analytic definition of regular points which can be found in \cite[II.7.11]{BonfiglioliLanconelliUguzzoniBook}.

\begin{definition}\label{d.regular.set.an}
A point $x\in \partial \Omega$ is called \emph{regular} (or $\mathcal{L}$-\emph{regular}) if
\[
\lim_{\Omega \ni z \rightarrow x} H^{\Omega}_{\phi} (z) = \phi \left( x \right)
\]
for every  continuous function $\phi: \partial \Omega \longrightarrow \mathbb{R}$. We call the set $\Omega$ \emph{regular} (or $\mathcal{L}$-\emph{regular})  if every boundary point of $\Omega$ is regular.
\end{definition}

The notion of regular points depends on the operator. The Euclidean space $\R^{N}$ is an example of a homogeneous Carnot group with respect to the Euclidean dilation, and the corresponding differential operator is the standard Laplacian $\Delta_{\R^N}$. If $\Omega$ is any bounded domain in $\R^{N}$ with a $C^{2}$-smooth boundary, then $\Omega$ is $\Delta_{\R^N}$-regular in the sense of Definition \ref{d.regular.set.an} since it satisfies the exterior ball condition \cite[Proposition 7.1.5]{BonfiglioliLanconelliUguzzoniBook}. In \cite{HansenHueber1987} it was shown that this is not true for more general Carnot groups. In particular, there are sub-Laplacians $\mathcal{L}$ on Carnot groups and bounded convex domains with smooth boundary which are not $\mathcal{L}$-regular. Nonetheless, given a Carnot group it is always possible to construct nice regular domains. More precisely, in \cite[Proposition 7.2.8]{BonfiglioliLanconelliUguzzoniBook} it is shown that on a homogeneous Carnot group $\mathbb{G}$ the balls $B_{r}\left( x \right), r>0, x \in \mathbb{G},$ with respect to the $\mathcal{L}$-gauge are regular in the sense of Definition \ref{d.regular.set.an}.

\begin{example}[Heisenberg group]
Suppose $\Hei$ is the Heisenberg group with the group operation given by
\begin{align*}
& (x_1, x_2, x_3) \star (y_1, y_2, y_3)
\\
& := \left(x_1 + y_1, x_2 + y_2,  x_3 + y_3 + \frac{1}{2} (x_1 y_2 - x_2 y_1) \right),
\end{align*}
then by \cite[Example 5.4.7]{BonfiglioliLanconelliUguzzoniBook} the $\mathcal{L}$-gauge is given by \[
\vert x \vert:= \sqrt[4]{ (x_1^2 + x_2^2)^2 + 16 x^2_3}.
\]

We can endow $\Hei$ with a different homogeneous norm by
\[
\rho( x ):= \sqrt[4]{ (x_1^2 + x_2^2)^2 + x^2_3}
\]
and denote by  $B_{r}$ the corresponding ball of radius $r$ centered at the identity. Then in \cite{Gaveau1977a} it is shown that $B_{r}$ is a regular set in the sense of Definition \ref{d.regular.set.prob}.
\end{example}

Let us recall some notions from potential theory \cite[Chapter 7]{BonfiglioliLanconelliUguzzoniBook}. If $V$ is $\mathcal{L}$-regular, then for every fixed $x\in V$ the map
\begin{align*}
& C \left( \partial V, \R \right) \longrightarrow \R
\\
& \phi \longmapsto H^{V}_{\phi} (x)
\end{align*}
is a linear positive functional on $C \left( \partial V, \R \right)$, and hence by the Riesz representation theorem there exists a Radon measure $\mu^{V}_{x}$ supported on $\partial V$ such that
\[
H_{\phi}^{V} (x) = \int_{\partial V} \phi (y) d\mu^{V}_{x} (y).
\]
The measure $\mu^{V}_{x}$ is called the $\mathcal{L}$-\textit{harmonic measure} related to $V$ and $x$.

\begin{definition}\label{def.superharmonic}
Let $\Omega$ be an bounded open connected set. A function $u: \Omega \rightarrow (-\infty, + \infty ]$ is called \emph{$\mathcal{L}$-superharmonic} in $\Omega$ if
\begin{enumerate}
\item $u$ is lower semi-continuous and $u<\infty$ in a dense subset of $\Omega$.
\item for every $\mathcal{L}$-regular open set $V$ with $\overline{V} \subset \Omega$ and for ever $x\in V$
\[
u(x) \geqslant \int_{\partial V} u(y) d\mu_{x}^{V} (y).
\]
\end{enumerate}
\end{definition}

The following result can be found in \cite[Theorem 1 p. 177]{ChungWalshBook}, for a standard Brownian motion on $\R^{d}$. The proof relies on the Markov property of the process, the semigroup property of the associated semigroup, and the definition of superharmonic functions. Thus it carries over to the setting of the current paper and therefore we do not give a proof.
We recall that $\left\{ g_{t} \right\}_{t}$ refers to the hypoelliptic Brownian motion, that is, the diffusion associated to $\mathcal{L}$.

\begin{theorem}\label{thm.ChungWalsh}
Suppose $D$ is a set such that $\overline{D}\subset \Omega$, and $u$ is an $\mathcal{L}$-superharmonic function defined in $\Omega$. Then
\[
\left\{ u\left( g_{\tau_{D} \wedge t} \right) \right\}_{t\geqslant 0}
\]
is a supermartingale under $\Prob^{x}$ for any $x\in D$ for which $u(x)<\infty$.
\end{theorem}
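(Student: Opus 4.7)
The strategy is to use the strong Markov property to reduce the supermartingale inequality to a one-time bound of the form $\mathbb{E}^y[u(g_{\tau_D \wedge h})] \leqslant u(y)$, and then to establish this bound by an exhaustion of $D$ by $\mathcal{L}$-regular subsets together with the integral mean inequality from Definition~\ref{def.superharmonic}(ii).

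Fix $0\leqslant s<t$ and $x\in D$ with $u(x)<\infty$. On $\{\tau_D\leqslant s\}$ the process $g_{\tau_D \wedge \cdot}$ is already frozen at $g_{\tau_D}$, so the supermartingale inequality reduces to an equality. On $\{\tau_D>s\}$ we have $\tau_D\wedge s=s$, and applying the strong Markov property of $\{g_t\}$ at the stopping time $s$ gives
\[
\mathbb{E}^x\bigl[u(g_{\tau_D\wedge t})\mid\mathcal F_s\bigr] \;=\; \mathbb{E}^{g_s}\bigl[u(g_{\tau_D\wedge(t-s)})\bigr].
\]
Thus the full claim follows once we show that for every $y\in D$ with $u(y)<\infty$ and every $h\geqslant 0$,
\[
\mathbb{E}^y\bigl[u(g_{\tau_D\wedge h})\bigr] \;\leqslant\; u(y).
\]

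To prove the one-time bound, exhaust $D$ from inside by $\mathcal{L}$-regular open sets. Since $\mathbb{G}$ is a homogeneous Carnot group, $\mathcal{L}$-gauge balls are $\mathcal{L}$-regular by \cite[Proposition 7.2.8]{BonfiglioliLanconelliUguzzoniBook}, so by a standard covering argument one builds an increasing sequence $V_n$ of relatively compact $\mathcal{L}$-regular open sets with $\overline{V_n}\subset V_{n+1}$ and $\bigcup_n V_n=D$. By continuity of the paths of the hypoelliptic Brownian motion, $\tau_{V_n}\nearrow\tau_D$ almost surely. For a fixed $V_n$ with $y\in V_n$, Definition~\ref{def.superharmonic}(ii) gives
\[
u(y)\;\geqslant\;\int_{\partial V_n}u(z)\,d\mu^{V_n}_y(z)\;=\;\mathbb{E}^y\bigl[u(g_{\tau_{V_n}})\bigr],
\]
because the $\mathcal{L}$-harmonic measure $\mu^{V_n}_y$ coincides with the distribution of $g_{\tau_{V_n}}$ under $\mathbb{P}^y$. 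Applying the same integral mean to an $\mathcal{L}$-regular neighborhood of every point $z\in V_n$ shows, via the strong Markov property and a shrinking-neighborhood/optional-stopping argument, that $P_r u(z)\leqslant u(z)$ for all $r\geqslant 0$. Splitting
\[
\mathbb{E}^y\bigl[u(g_{\tau_{V_n}\wedge h})\bigr] \;=\; \mathbb{E}^y\bigl[u(g_{\tau_{V_n}})\mathbf{1}_{\tau_{V_n}\leqslant h}\bigr]+\mathbb{E}^y\bigl[u(g_h)\mathbf{1}_{\tau_{V_n}>h}\bigr],
\]
and conditioning the first term on $\mathcal F_{\tau_{V_n}\wedge h}$ via strong Markov, both contributions are controlled by $u(y)$. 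This yields $\mathbb{E}^y\bigl[u(g_{\tau_{V_n}\wedge h})\bigr]\leqslant u(y)$.

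Finally, pass $n\to\infty$. Since $u$ is lower semicontinuous, $\tau_{V_n}\nearrow\tau_D$, and the paths are continuous, $g_{\tau_{V_n}\wedge h}\to g_{\tau_D\wedge h}$ and hence $\liminf_n u(g_{\tau_{V_n}\wedge h})\geqslant u(g_{\tau_D\wedge h})$ by lower semicontinuity. Fatou's lemma then gives
\[
\mathbb{E}^y\bigl[u(g_{\tau_D\wedge h})\bigr]\;\leqslant\;\liminf_n\mathbb{E}^y\bigl[u(g_{\tau_{V_n}\wedge h})\bigr]\;\leqslant\;u(y),
\]
finishing the proof. The main obstacle is the middle step, namely promoting the integral-mean defining property of superharmonicity to the pointwise semigroup inequality $P_r u\leqslant u$; all other ingredients are the Markov property of $\{g_t\}$, continuity of paths, and standard regular-exhaustion bookkeeping.
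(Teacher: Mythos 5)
The paper does not actually prove Theorem \ref{thm.ChungWalsh}; it cites \cite[Theorem 1, p.~177]{ChungWalshBook} and asserts that the argument, which uses only the Markov and semigroup properties and the definition of superharmonicity, carries over. Your sketch tries to reconstruct such a proof, and the outer skeleton is reasonable: reduce by the Markov property at the deterministic time $s$ (not the \emph{strong} Markov property, as you write) to a one-time bound $\E^y[u(g_{\tau_D\wedge h})]\leqslant u(y)$, exhaust $D$ by $\mathcal{L}$-regular sets, and pass to the limit using lower semicontinuity and Fatou (on $\overline D$ compact, $u$ is bounded below, so Fatou applies).

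The crux, however, contains a genuine gap, and the auxiliary step you build around it is incorrect. You assert that "a shrinking-neighborhood/optional-stopping argument" yields $P_r u(z)\leqslant u(z)$ for all $r\geqslant 0$; this superaveraging property \emph{is} the content of the theorem, and it cannot simply be invoked. One must also say which semigroup $P_r$ is, since $u$ lives only on $\Omega$ — it has to be the semigroup killed on exiting a suitable neighborhood of $\overline D$, and proving $P^D_r u\leqslant u$ is exactly the work of the Chung--Walsh argument (a chain-of-small-balls or regularization scheme). Separately, the two-term split
\[
\E^y\bigl[u(g_{\tau_{V_n}\wedge h})\bigr]=\E^y\bigl[u(g_{\tau_{V_n}})\mathbf 1_{\tau_{V_n}\leqslant h}\bigr]+\E^y\bigl[u(g_h)\mathbf 1_{\tau_{V_n}>h}\bigr]
\]
does not close: bounding each piece by $u(y)$ gives $2u(y)$, and the natural comparison of the second piece with $\E^y[u(g_{\tau_{V_n}})\mathbf 1_{\tau_{V_n}>h}]$ via the Markov property at time $h$ runs in the \emph{wrong} direction, since the integral-mean property applied at $g_h$ gives $\E^{g_h}[u(g_{\tau_{V_n}})]\leqslant u(g_h)$, not $\geqslant$. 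If one did have $P^D_r u\leqslant u$, the supermartingale property of $u(g_{\tau_{V_n}\wedge t})$ would follow immediately from the Markov property of the killed process together with optional stopping, making the split superfluous as well as ineffective. In short: the part you flag as "the main obstacle" is exactly the part that must be proved, and the surrounding bookkeeping does not compensate for its absence.
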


\begin{theorem}\label{thm.inequality}
Suppose $\Omega$ is an open bounded set, and $u$ is an $\mathcal{L}$-superharmonic function defined on $\Omega$. Then
\begin{equation}\label{eqn.superhar.estimate}
\E^{x} \left[u\left( g_{\tau_{D}} \right) \right] \leqslant u(x)
\end{equation}
for every $x \in \Omega$.
\end{theorem}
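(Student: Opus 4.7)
The plan is to deduce \eqref{eqn.superhar.estimate} from Theorem \ref{thm.ChungWalsh} by letting $t \to \infty$ in the supermartingale inequality, combining Fatou's lemma with lower semicontinuity of $u$ and compactness of $\overline{D} \subset \Omega$. First I would dispose of the trivial reductions: the case $u(x) = +\infty$ is immediate, and if $x \in \Omega \setminus \overline{D}$ then path continuity forces $\tau_D = 0$ almost surely under $\Prob^x$, so $g_{\tau_D} = x$ and the inequality collapses to an equality. A companion argument using Blumenthal's $0$--$1$ law together with the strong Markov property (after waiting a short time for $g_t$ to enter $D$) handles $x \in \partial D$.

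For the main case $x \in D$ with $u(x) < \infty$, Theorem \ref{thm.ChungWalsh} yields that $M_t := u(g_{\tau_D \wedge t})$ is a $\Prob^x$-supermartingale, so $\E^x[M_t] \leqslant u(x)$ for every $t \geqslant 0$. The four ingredients needed to pass to the limit are: (a) $\tau_D < \infty$ almost surely, which follows from boundedness of $D$ together with strict positivity of the continuous transition density of $g_\cdot$, forcing exit from any bounded set in finite time; (b) path continuity gives $g_{\tau_D \wedge t} \to g_{\tau_D} \in \partial D \subset \overline{D} \subset \Omega$ almost surely as $t \to \infty$; (c) lower semicontinuity of $u$ on $\Omega$ then yields $\liminf_{t \to \infty} M_t \geqslant u(g_{\tau_D})$; (d) since $u > -\infty$ pointwise (built into Definition \ref{def.superharmonic}) and $u$ is lower semicontinuous on the compact set $\overline{D}$, $u$ attains a finite minimum on $\overline{D}$, so there is a constant $C \geqslant 0$ with $M_t + C \geqslant 0$ for all $t$.

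With these in hand, Fatou's lemma applied to the nonnegative sequence $M_t + C$, combined with the supermartingale bound, produces
\begin{equation*}
\E^x\!\left[u(g_{\tau_D})\right] \leqslant \E^x\!\left[\liminf_{t \to \infty} M_t\right] \leqslant \liminf_{t \to \infty} \E^x[M_t] \leqslant u(x),
\end{equation*}
which is exactly \eqref{eqn.superhar.estimate}. The main obstacle I foresee is (d), namely securing a uniform lower bound for $u$ on $\overline{D}$ so that Fatou can be invoked without integrability concerns; this crucially uses that $\overline{D}$ is a compact subset of the open set $\Omega$ on which $u$ is defined and lower semicontinuous, together with the convention that superharmonic functions are everywhere $> -\infty$. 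Ingredients (a)--(c) are standard consequences of path continuity and the heat-kernel positivity already available on $\mathbb{G}$.
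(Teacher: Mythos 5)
Your argument is internally sound, but it proves a strictly weaker statement than the one the paper needs. The set $D$ appearing in \eqref{eqn.superhar.estimate} is never introduced in the theorem statement, and you resolve the ambiguity by taking $D$ to be a set with $\overline{D}\subset\Omega$, that is, exactly the $D$ of Theorem~\ref{thm.ChungWalsh}. That is not what the authors intend: their own proof exhausts $\Omega$ by open sets $\Omega_n$ with $\overline{\Omega}_n\subset\Omega$ and passes to a two-parameter limit, and the theorem is later invoked in the proof of Theorem~\ref{thm.equivalence.reg.pts} precisely in the form $\E^{x}\bigl[w(g_{\tau_\Omega})\bigr]\leqslant w(x)$, with the exit time from all of $\Omega$. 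So the intended conclusion is $\E^{x}\bigl[u(g_{\tau_\Omega})\bigr]\leqslant u(x)$, not the compactly-contained version you establish.

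The discrepancy bites exactly at your step (d). You secure a uniform lower bound $M_t+C\geqslant 0$ by using lower semicontinuity of $u$ on the compact set $\overline{D}\subset\Omega$; this is what makes Fatou legal in one shot. When $D=\Omega$ that argument collapses: $\overline{\Omega}\not\subset\Omega$, a superharmonic $u$ is only guaranteed to be defined, lower semicontinuous, and $>-\infty$ on the open set $\Omega$, and it can be unbounded below along sequences approaching $\partial\Omega$, so no such constant $C$ exists. Your preliminary reductions for $x\in\Omega\setminus\overline{D}$ and $x\in\partial D$, while correct, are also moot under the intended reading since $\partial D=\partial\Omega$ lies outside the domain of interest. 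The paper's exhaustion is the device that avoids ever needing a global lower bound for $u$: apply Theorem~\ref{thm.ChungWalsh} with $D=\Omega_n$ to get $\E^{x}\bigl[u(g_{t\wedge\tau_n})\bigr]\leqslant u(x)$, let $n\to\infty$ by Fatou to obtain $\E^{x}\bigl[u(g_{t\wedge\tau_\Omega})\bigr]\leqslant u(x)$, show $\tau_\Omega<\infty$ almost surely via the heat-kernel bound \eqref{eqn.bound.heat.kernel} (as you also observe), and finally send $t\to\infty$. Your ingredients (a)--(c) are the right ones, but they need to be deployed inside that exhaustion-plus-double-limit scheme, not on a single compactly contained set.
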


\begin{proof}
Let $\{ \Omega_{n} \}_{n \geqslant 	1}$ be a family of open bounded sets such that $\overline{\Omega}_{n} \subset \Omega$ and $\cup_{n=1}^{\infty} \Omega_{n} = \Omega$, and let $\tau_{n} := \tau_{\Omega_{n}}$. By Theorem \ref{thm.ChungWalsh} with $D=\Omega_{n}$ it follows that $\left\{ u\left( g_{\tau_{n} \wedge t} \right) \right\}_{t\geqslant 0}$ is a supermartingale and hence for any $t>0$
\[
\E^{x} \left[u\left( g_{t \wedge \tau_{n}} \right) \right] \leqslant u(x).
\]
Note that $\{ t < \tau_{n} \}  \nearrow\{ t< \tau_{\Omega} \}$ as $n\rightarrow \infty$, and hence if we let $n\rightarrow \infty$ by Fatou's Lemma the previous estimate becomes
\begin{equation}\label{eqn.estimate}
\E^{x} \left[u\left( g_{t \wedge \tau_{\Omega}} \right) \right] \leqslant u(x).
\end{equation}
Note that $\tau_{\Omega} < \infty$ $\Prob^{x}$-a.s. for any $x\in \Omega$. Indeed, $\{ \tau_{\Omega} = \infty \} = \cap _{M=1}^{\infty} \{ \tau_{\Omega} >M \}$, and hence by \eqref{eqn.bound.heat.kernel} for any $x\in \Omega$
\begin{align*}
& \Prob^{x} \left( \tau_{\Omega} = \infty \right) \leqslant \Prob^{x} \left( \tau_{\Omega} > M \right)
\\
& = \int_{\Omega} p^{\Omega}_{M} (x,y) dy \leqslant A \vert \Omega \vert M^{-\frac{Q}{2}},
\end{align*}
and by letting $M\rightarrow \infty$ it follows that $\Prob^{x} \left( \tau_{\Omega} = \infty \right)=0$. Thus the proof is completed by letting $t\rightarrow \infty$ in \eqref{eqn.estimate}.
\end{proof}

We now need the following version of \cite[Theorem 2.12, p. 245]{KaratzasShreveBMBook}.

\begin{proposition}\label{prop.regularity}
Let $y\in \partial \Omega$ and assume that
\[
\lim_{x\rightarrow y} \E^{x} \left[ f \left(  g_{\tau_{\Omega}}  \right) \right] = f(y),
\]
for every bounded measurable function $f: \partial \Omega \rightarrow \R$ which is continuous at $y$. Then $y$ is a regular point in the sense of Definition \ref{d.regular.set.prob}.
\end{proposition}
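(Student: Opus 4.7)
The plan is to argue by contradiction and use Blumenthal's zero–one law to reduce the statement to showing that, under $\Prob^{y}$, the exit point at time $\tau_\Omega$ must coincide with $y$; then the contradiction comes from polarity of singletons. Since $\{g_t\}_{t\geq 0}$ is a Hunt process, the event $\{\tau_\Omega=0\}$ lies in $\mathcal{F}_{0+}$, and so Blumenthal's zero–one law forces $\Prob^y(\tau_\Omega=0)\in\{0,1\}$. It suffices to exclude the first possibility, so I assume for contradiction that $\Prob^y(\tau_\Omega>0)=1$.

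Next I would pick a concrete test function to feed into the hypothesis: let $\rho$ be an $\mathcal{L}$-gauge on $\mathbb{G}$ and set $f(z):=\rho(y^{-1}\star z)\wedge 1$, so that $f$ is bounded and continuous on $\partial\Omega$, with $f(y)=0$ and $f(z)>0$ for $z\neq y$. Writing $h(x):=\E^x[f(g_{\tau_\Omega})]$, the hypothesis reads $h(x)\to 0$ as $x\to y$. I would apply the Markov property at a deterministic time $s>0$ to decompose
\[
\E^y[f(g_{\tau_\Omega})] = \E^y\bigl[f(g_{\tau_\Omega})\mathbbm{1}_{\{\tau_\Omega\leqslant s\}}\bigr] + \E^y\bigl[h(g_s)\mathbbm{1}_{\{\tau_\Omega>s\}}\bigr],
\]
using the identity $\tau_\Omega=s+\tau_\Omega\circ\theta_s$ on $\{\tau_\Omega>s\}$, and then let $s\to 0^+$. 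The first summand vanishes because $\Prob^y(\tau_\Omega\leqslant s)\downarrow\Prob^y(\tau_\Omega=0)=0$ and $f$ is bounded. For the second, path continuity gives $g_s\to y$ almost surely; on $\{\tau_\Omega>s\}$ the process satisfies $g_s\in\Omega$, hence $h(g_s)\to 0$ by the hypothesis, and bounded convergence (with $|h|\leqslant\|f\|_\infty$) kills this term too. Thus $\E^y[f(g_{\tau_\Omega})]=0$, and since $f\geqslant 0$ has its zero set equal to $\{y\}$, I obtain $g_{\tau_\Omega}=y$ almost surely under $\Prob^y$.

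The final step, which I expect to be the main obstacle, is to convert this into a contradiction. We now have that under $\Prob^y$ the continuous process $g$ visits its own starting point $y$ at the strictly positive stopping time $\tau_\Omega$. The natural way to rule this out is polarity of singletons for the hypoelliptic Brownian motion on $\mathbb{G}$: under the standing assumptions $\dim\mathfrak{g}\geqslant 3$ and $r\geqslant 2$ one has $Q\geqslant 3$, and the fundamental solution $\Gamma=\alpha_\rho\rho^{2-Q}$ has a pole at the origin, so $\{y\}$ has zero $\mathcal{L}$-capacity. A standard potential-theoretic argument then gives $\Prob^y\bigl(g_t=y\ \text{for some}\ t>0\bigr)=0$, which contradicts $g_{\tau_\Omega}=y$ on the event $\{\tau_\Omega>0\}$ of full $\Prob^y$-probability. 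The proof would then be complete, with the polarity statement being the only nontrivial input beyond the Markov property and the hypothesis itself.
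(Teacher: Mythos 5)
Your argument is correct, and at the structural level it mirrors what the paper does: the authors simply observe that the proof of \cite[Theorem 2.12, p.~245]{KaratzasShreveBMBook} only needs the Markov property plus the fact that the process almost surely never returns to its starting point, verify the latter, and then defer to Karatzas--Shreve. What you have done is unpack that reference explicitly --- Blumenthal's zero--one law, a concrete test function built from an $\mathcal{L}$-gauge, the Markov decomposition at a deterministic time $s$, and bounded convergence as $s\downarrow 0$ --- and all of those steps go through. (One small point of hygiene worth keeping in mind: the hypothesis $\lim_{x\to y}\E^x[f(g_{\tau_\Omega})]=f(y)$ is to be read with $x\in\Omega$, which is exactly what you use, since on $\{\tau_\Omega>s\}$ you have $g_s\in\Omega$; and one also needs $\tau_\Omega<\infty$ $\Prob^y$-a.s., which holds for bounded $\Omega$ by the heat kernel bound as in the proof of Theorem~\ref{thm.inequality}.)

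The one place where you genuinely diverge from the paper is the polarity of $\{y\}$. You invoke transience ($Q>2$, in fact $Q\geqslant 4$ under the standing assumptions $\dim V_1\geqslant 2$ and $r\geqslant 2$) together with the pole of the fundamental solution $\Gamma=\alpha_\rho\rho^{2-Q}$ to conclude that singletons have zero $\mathcal{L}$-capacity, hence are polar. That is a correct but comparatively heavy input: it requires the identification of polar sets with zero-capacity sets for the associated Dirichlet form. The paper's justification is more elementary and worth internalizing: write $g_t=(B_t,A_2(t),\dots,A_r(t))$ in exponential coordinates, where $B_t$ is a $d_1$-dimensional standard Brownian motion; if $g_t$ returned to $y$ at some $t>0$, its first-layer projection $B_t$ would return to its starting point, which is impossible a.s.\ because $d_1\geqslant 2$. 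This projection argument avoids capacity theory entirely and gets exactly the polarity statement you need. Either route closes the gap, so your proof stands; the projection trick just buys you a shorter and more self-contained final step.
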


\begin{proof}
The proof given in \cite[Theorem 2.12 p. 245]{KaratzasShreveBMBook} holds for a standard Brownian motion in $\R^{d}$ with $d\geqslant 2$, but it only uses the Markov property and the fact that a standard Brownian motion never returns to its starting point when $d\geqslant 2$. The  hypoelliptic Brownian motion $g_{\, \cdot}$ is a Markov process that never returns to its starting point. Indeed, one can write
\[
g_{t} = \left( B_{t}, A_{2}(t) , \ldots , A_{r}(t) \right),
\]
where $B_{t}$ is a $d_{1}$-dimensional standard Brownian motion and $A_{j}(t) \in \R^{d_{j}}$ is an iterated stochastic integral for $j=2, \ldots,r$. Thus, if $g_{t}$ were to return to its starting point so would $B_{t}$, and that is not possible since $d_{1} \geqslant 2$.  The  proof of Proposition \ref{prop.regularity} then follows as in  \cite[Theorem 2.12 p. 245]{KaratzasShreveBMBook}.
\end{proof}

\begin{definition}\label{def.barrier}
Let $y\in \partial \Omega$. An $\mathcal{L}$-barrier at $y$ in $\Omega$ is a superharmonic map $w: \Omega \rightarrow (-\infty, + \infty]$ such that
\begin{enumerate}
\item $w(x)>0$ for every $x\in \Omega$.

\item $\lim_{x\rightarrow y} w(x) =0$.
\end{enumerate}
\end{definition}
In \cite[Theorem 6.10.4]{BonfiglioliLanconelliUguzzoniBook} it is shown that a point $y\in \partial \Omega$ is regular in the sense of Definition \ref{d.regular.set.an} if and only if there exists an $\mathcal{L}$-barrier at $y$ in $\Omega$. More precisely, for every regular point $y\in \partial \Omega$ one can construct an $\mathcal{L}$-barrier $s_{y}^{\Omega}$ such that
\begin{enumerate}
\item $s_{y}^{\Omega}$ is $\mathcal{L}$-harmonic in $\Omega$.

\item $\inf_{\Omega \backslash U} s_{y}^{\Omega}>0$ for every neighborhood $U$ of $y$.
\end{enumerate}
In particular, for every $z\in \partial \Omega$ with $z\neq y$ we have that
\[
\liminf_{x\rightarrow z} s_{y}^{\Omega} (x) >0.
\]
We can now prove the main Theorem of this section.

\begin{theorem}\label{thm.equivalence.reg.pts}
Let $\Omega$ be an open bounded connected set and let $y\in \partial \Omega$ be fixed. Then $y$ is regular in the sense of Definition \ref{d.regular.set.prob} if and only if is regular in the sense of Definition \ref{d.regular.set.an}.
\end{theorem}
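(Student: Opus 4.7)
The plan is to prove the two implications separately, using the $\mathcal{L}$-harmonic barrier characterization of analytic regularity from \cite[Theorem 6.10.4]{BonfiglioliLanconelliUguzzoniBook} together with Theorem~\ref{thm.inequality} in one direction, and Proposition~\ref{prop.regularity} together with the upper semicontinuity argument of Proposition~\ref{prop.efunctions.properties}(4) and path continuity of $g_\cdot$ in the other. Both implications ultimately come down to showing that as $x\to y$ from inside $\Omega$, the exit distribution $g_{\tau_\Omega}^x$ concentrates at the point mass at $y$.

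For the implication analytic $\Rightarrow$ probabilistic, let $w=s_y^\Omega$ be the $\mathcal{L}$-harmonic barrier at $y$ supplied by \cite[Theorem 6.10.4]{BonfiglioliLanconelliUguzzoniBook}, so that $w>0$ in $\Omega$, $\lim_{x\to y}w(x)=0$, and $\inf_{\Omega\setminus U}w>0$ for every neighborhood $U$ of $y$. I would extend $w$ to $\overline{\Omega}$ by its lower semicontinuous envelope $\tilde w(z):=\liminf_{x\to z,\,x\in\Omega}w(x)$, so that $\tilde w(y)=0$ and $c_U:=\inf_{\partial\Omega\setminus U}\tilde w>0$ for every neighborhood $U$ of $y$. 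Running the Fatou step inside the proof of Theorem~\ref{thm.inequality} along a nested exhaustion $\Omega_n\nearrow\Omega$, and using lower semicontinuity of $\tilde w$ together with continuity of paths, gives $\E^x[\tilde w(g_{\tau_\Omega})]\leqslant w(x)$ for every $x\in\Omega$. Consequently,
\begin{equation*}
c_U\,\Prob^x\left(g_{\tau_\Omega}\notin U\right)\leqslant \E^x\left[\tilde w(g_{\tau_\Omega})\right]\leqslant w(x),
\end{equation*}
and the right-hand side tends to $0$ as $x\to y$. Splitting $\E^x[f(g_{\tau_\Omega})]-f(y)$ over $\{g_{\tau_\Omega}\in U\}$ and its complement then yields $\lim_{x\to y}\E^x[f(g_{\tau_\Omega})]=f(y)$ for every bounded measurable $f$ continuous at $y$, and Proposition~\ref{prop.regularity} concludes.

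For the converse, assume $\Prob^y(\tau_\Omega=0)=1$. The plan is to show $\lim_{x\to y}\E^x[\phi(g_{\tau_\Omega})]=\phi(y)$ for every $\phi\in C(\partial\Omega,\R)$ and invoke the Perron--Wiener--Brelot identification $H_\phi^\Omega(x)=\E^x[\phi(g_{\tau_\Omega})]$ on $\Omega$ (available in the Carnot setting through \cite[Chapter 8]{BonfiglioliLanconelliUguzzoniBook}). Three inputs combine: (i) upper semicontinuity of $x\mapsto\Prob^x(\tau_\Omega>\delta)$ (as used in the proof of Proposition~\ref{prop.efunctions.properties}(4)), which together with $\Prob^y(\tau_\Omega>\delta)=0$ forces $\Prob^x(\tau_\Omega>\delta)\to 0$ as $x\to y$ for each $\delta>0$; (ii) sample-path continuity together with left-invariance of the law of $g_\cdot$, yielding $\sup_{0\leqslant t\leqslant\delta}d(g_t^x,x)\to 0$ in $\Prob^x$-probability as $\delta\downarrow 0$, uniformly in $x$; (iii) continuity of $\phi$ at $y$. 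Splitting $\E^x[\phi(g_{\tau_\Omega})]-\phi(y)$ over $\{\tau_\Omega\leqslant\delta\}$ and its complement, these inputs force the difference to zero as $x\to y$.

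The main obstacle is the first direction: Theorem~\ref{thm.inequality} is stated for $u$ defined on $\Omega$, while $g_{\tau_\Omega}\in\partial\Omega$, so the inequality $\E^x[\tilde w(g_{\tau_\Omega})]\leqslant w(x)$ for the lower semicontinuous extension must be justified carefully by applying Fatou's lemma along $\Omega_n\nearrow\Omega$ and exploiting $\liminf_n w(g_{\tau_n})\geqslant \tilde w(g_{\tau_\Omega})$ on $\{\tau_\Omega<\infty\}$. A secondary obstacle is the PWB--stochastic identification in the second direction; if a clean reference is unavailable in the required generality, one can instead verify directly, using Theorem~\ref{thm.inequality} applied to superharmonic majorants and subharmonic minorants of $\phi$, that $x\mapsto \E^x[\phi(g_{\tau_\Omega})]$ is sandwiched between the Perron lower and upper envelopes of $\phi$ and therefore coincides with $H_\phi^\Omega$.
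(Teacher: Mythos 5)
Your proof is correct, and in one of the two directions it diverges from the paper's in a genuine way. For A-regular $\Rightarrow$ P-regular, both arguments hinge on applying Theorem~\ref{thm.inequality} to the barrier $w=s_y^\Omega$ and feeding the output into Proposition~\ref{prop.regularity}; the paper packages the estimate via the pointwise bound $|f(z)-f(y)|\leqslant\max(\varepsilon,kw(z))$ on $\partial\Omega$ (following Karatzas--Shreve), while you instead derive a quantitative bound $\Prob^x(g_{\tau_\Omega}\notin U)\leqslant w(x)/c_U$ and split on $\{g_{\tau_\Omega}\in U\}$. These are the same idea in two algebraic guises, but your treatment of the extension of $w$ to $\overline\Omega$ via the lower semicontinuous envelope $\tilde w$, together with the Fatou step along $\Omega_n\nearrow\Omega$, is more careful than the paper's (which silently evaluates $w$ at $g_{\tau_\Omega}\in\partial\Omega$ and also contains a suspect $\E^x[\max(\varepsilon,kw(g_{\tau_\Omega}))]=\max(\varepsilon,k\E^x[w(g_{\tau_\Omega})])$ identity; what is actually needed and true is $\E^x[\max(\varepsilon,kw)]\leqslant\varepsilon+k\E^x[w]$, which still yields the $\limsup\leqslant\varepsilon$ conclusion). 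For the converse P-regular $\Rightarrow$ A-regular, the two proofs are substantively different. The paper invokes the barrier characterization from \cite[Theorem 6.10.4]{BonfiglioliLanconelliUguzzoniBook} and then claims, following \cite[Exercise 10 p.~188]{ChungWalshBook}, that $w(x):=\E^x[\tau_\Omega]$ is the desired barrier (superharmonicity, positivity, and $\E^x[\tau_\Omega]\to0$ as $x\to y$ are left as routine). You instead go directly at the analytic definition: you show $\E^x[\phi(g_{\tau_\Omega})]\to\phi(y)$ as $x\to y$ via the upper semicontinuity of $x\mapsto\Prob^x(\tau_\Omega>\delta)$ and uniform small-time path control, and then appeal to (or, via the sandwich by Perron envelopes plus Wiener resolutivity, verify) the identification $H_\phi^\Omega(x)=\E^x[\phi(g_{\tau_\Omega})]$. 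Both routes are sound; the paper's is shorter because it outsources the barrier-construction step to a reference, while yours is more self-contained but requires the PWB--stochastic identification, whose verification via Theorem~\ref{thm.inequality} you correctly sketch.
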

\begin{proof}
To simplify the notation, we say that a point $y\in \partial \Omega$ is P-regular (A-regular) if it is regular in the sense of Definition \ref{d.regular.set.prob} (Definition \ref{d.regular.set.an}).

Let $y\in \partial \Omega$ be an A-regular point and $w(x):=s_{y}^{\Omega}(x)$ be the $\mathcal{L}$-barrier defined above. By Proposition \ref{prop.regularity} it is enough to show that
\[
\lim_{x\rightarrow y} \E^{x} \left[ f \left(  g_{\tau_{\Omega}}  \right) \right] = f(y),
\]
for every bounded measurable function $f: \partial \Omega \rightarrow \R$ which is continuous at $y$. The following argument is a modification of \cite[Proposition 2.15 p. 248]{KaratzasShreveBMBook}. Set $M:= \sup_{z\in \partial \Omega} \vert f(z) \vert$, and for any $\varepsilon>0$ let $\delta >0$ be such that $\vert f(z) - f(y) \vert \leqslant \varepsilon$ for any $z\in \partial \Omega$ with $d(z,y) < \delta$, where $d$ is a homogeneous distance. We know that $w(x)>0$ for every $x\in \Omega$ and $\liminf_{x\rightarrow z} w(x) >0$ for any $z\in \partial \Omega$ with $z\neq y$. Thus, there exists a $k$ such that $kw(x) \geqslant 2M$ for any $x\in \overline{\Omega}$ with $d(x,y) \geqslant \delta$. Thus, for any $z\in\partial \Omega$ we have that $\vert f(z) - f(y) \vert \leqslant \varepsilon$ if $d(z,y) < \delta$, and $\vert f(z) - f(y) \vert \leqslant 2M \leqslant k w(z)$ if $d(z,y) \geqslant \delta$. Hence, for any $z\in\partial \Omega$
\[
\vert f(z) - f(y) \vert \leqslant \max( \varepsilon , kw(z) ).
\]
$\mathcal{L}$-barriers are superharmonic and thus by Theorem \ref{thm.inequality} it follows that
\begin{align*}
& \vert \E^{x} \left[ f \left( g_{\tau_{\Omega}} \right) \right] - f(y) \vert \leqslant \E^{x} \left[ \vert  f \left( g_{\tau_{\Omega}} \right) - f(y) \vert \right]
\\
& \leqslant \E^{x} \left[  \max \left( \varepsilon, k w \left( g_{\tau_{\Omega}} \right) \right) \right] = \max \left( \varepsilon, k \E^{x} \left[ w \left( g_{\tau_{\Omega}} \right) \right]  \right)
\\
& \leqslant \max \left( \varepsilon, k w(x) \right),
\end{align*}
and then
\[
\limsup_{x\rightarrow y}\vert \E^{x} \left[ f \left( g_{\tau_{\Omega}} \right) \right] - f(y) \vert \leqslant \max \left( \varepsilon, k \limsup_{x\rightarrow y} w(x) \right) = \max (\varepsilon, 0) =\varepsilon,
\]
for any $\varepsilon>0$ and for any bounded measurable function $f: \partial \Omega \rightarrow \R$ which is continuous at $y$.

We now need to prove that P-regularity implies A-regularity. Let $y\in \partial \Omega$ be a P-regular point. By \cite[Theorem 6.10.4]{BonfiglioliLanconelliUguzzoniBook} it is enough to construct an $\mathcal{L}$-barrier at $y$ in $\Omega$. Following \cite[Exercise 10 p. 188]{ChungWalshBook}, it is easy to prove that $w(x) := \E^{x} \left[\tau_{\Omega} \right]$ is the desired $\mathcal{L}$-barrier.
\end{proof}

\section{Applications}\label{sec.applications}

\subsection{Small deviations}

Let $\Omega \subset \mathbb{G}$ be an open bounded connected set such that $e\in \Omega$, and for every $\varepsilon>0$ let
\begin{equation}\label{e.OmegaEps}
\Omega_{\varepsilon} := \delta_{\varepsilon} \left( \Omega \right),
\end{equation}
where $\delta_{\varepsilon} : \mathbb{G}\longrightarrow \mathbb{G}$ is the group dilation. In this section we describe how the spectral results from Section \ref{sec.spectral.prop} can be applied to find the asymptotic of the exit time $\tau_{\Omega_{\varepsilon}}$ of $g_{t}$ from $\Omega_{\varepsilon}$ as $\varepsilon \rightarrow 0$.

First, let $p_{t}$ be the heat kernel given by \eqref{eqn.global.tran.prob}. Then, $p_{t}$ satisfies the following scaling property \cite[Theorem 3.1 (i)]{Folland1975a}, for any $\varepsilon>0$
\begin{equation}\label{eqn.global.heatkernel.scaling}
p_{\frac{t}{\varepsilon^{2}}} (x,y) = \varepsilon^{Q} p_{t} \left(\delta_{\varepsilon} (x), \delta_{\varepsilon} (y) \right),
\end{equation}
for any $x,y \in \mathbb{G}$, where $Q$ denotes the homogeneous dimension of $\mathbb{G}$.

\begin{remark}[Space-time scaling in homogeneous Carnot groups]\label{rmk.scaling.process}
Let $g_{t}$ be a hypoelliptic Brownian motion. Then for any $x\in \mathbb{G}$ and for any $\varepsilon>0$ we have that
\begin{equation}\label{eqn.scaling.processs}
g^{x}_{\frac{t}{\varepsilon^{2}}} \stackrel{(d)}{=} \delta_{\frac{1}{\varepsilon}} \left( g_{t}^{\delta_{\varepsilon} (x)} \right).
\end{equation}
Indeed, by \eqref{eqn.global.heatkernel.scaling} for any Borel set $A \subset \mathbb{G}$
\begin{align*}
& \Prob \left( g^{x}_{\frac{t}{\varepsilon^{2}}} \in A \right)= \Prob^{x} \left(  g_{\frac{t}{\varepsilon^{2}}} \in A\right)
\\
& = \int_{A} p_{\frac{t}{\varepsilon^{2}}} (x,y) dy = \varepsilon^{Q} \int_{A} p_{t} \left( \delta_{\varepsilon} (x), \delta_{\varepsilon}(y) \right)dy
\\
& = \int_{\delta_{\varepsilon} (A)} p_{t} \left( \delta_{\varepsilon} (x), z \right) dz = \Prob^{\delta_{\varepsilon} (x) } \left(  g_{t} \in \delta_{\varepsilon} (A) \right) = \Prob \left(    \delta_{\frac{1}{\varepsilon}} \left( g_{t}^{\delta_{\varepsilon} (x)} \right) \in A   \right).
\end{align*}
\end{remark}

\begin{lemma}\label{lemma.scaling.dirichleth.HK}
Let $\Omega$ be an open set and $p_{t}^{\Omega}$ be the Dirichlet heat kernel. Then for any $\varepsilon >0$, and any $x,y \in \Omega$
\begin{equation}\label{eqn.dirichlet.heatkernel.scaling}
p^{\Omega}_{\frac{t}{\varepsilon^{2}}} (x,y) = \varepsilon^{Q} p^{\Omega_{\varepsilon}}_{t} \left(\delta_{\varepsilon} (x), \delta_{\varepsilon} (y) \right),
\end{equation}
where $\Omega_{\varepsilon}$ is defined in \eqref{e.OmegaEps}.
\end{lemma}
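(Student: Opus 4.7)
The plan is to derive the identity from Hunt's formula \eqref{eqn.Dirichlet.HK} for $p_{t}^{\Omega_{\varepsilon}}$, and then convert every ingredient on the right-hand side into its ``un-dilated'' counterpart using two scaling tools: the global heat kernel scaling \eqref{eqn.global.heatkernel.scaling} and the space-time scaling \eqref{eqn.scaling.processs} of the hypoelliptic Brownian motion.

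First I would set $\tilde{x}=\delta_{\varepsilon}(x)$ and $\tilde{y}=\delta_{\varepsilon}(y)$, apply Hunt's formula at time $t$ for the domain $\Omega_{\varepsilon}$, and realize the expectation using a version $\tilde{g}$ of the hypoelliptic Brownian motion defined by $\tilde{g}^{\tilde{x}}_{s}:=\delta_{\varepsilon}\left( g^{x}_{s/\varepsilon^{2}}\right)$; by Remark \ref{rmk.scaling.process} this $\tilde{g}$ has the same law as $g$ starting from $\tilde{x}$. Because $\delta_{\varepsilon}$ is a homeomorphism sending $\Omega$ bijectively onto $\Omega_{\varepsilon}$, the exit times are related by
\begin{align*}
\tau_{\Omega_{\varepsilon}}(\tilde{g}^{\tilde{x}})
&=\inf\{s>0 : \delta_{\varepsilon}(g^{x}_{s/\varepsilon^{2}})\notin \Omega_{\varepsilon}\}
=\varepsilon^{2}\,\tau_{\Omega}(g^{x}),
\end{align*}
and, on the corresponding event, $\tilde{g}^{\tilde{x}}_{\tau_{\Omega_{\varepsilon}}}=\delta_{\varepsilon}\bigl(g^{x}_{\tau_{\Omega}}\bigr)$.

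Next I would substitute these identifications into Hunt's formula and apply \eqref{eqn.global.heatkernel.scaling} term by term: the leading term becomes $p_{t}(\tilde{x},\tilde{y})=\varepsilon^{-Q}p_{t/\varepsilon^{2}}(x,y)$, and inside the expectation the integrand
\begin{align*}
p_{t-\tau_{\Omega_{\varepsilon}}}\!\bigl(\tilde{g}^{\tilde{x}}_{\tau_{\Omega_{\varepsilon}}},\tilde{y}\bigr)
=p_{t-\varepsilon^{2}\tau_{\Omega}}\!\bigl(\delta_{\varepsilon}(g^{x}_{\tau_{\Omega}}),\delta_{\varepsilon}y\bigr)
=\varepsilon^{-Q}\,p_{t/\varepsilon^{2}-\tau_{\Omega}}\!\bigl(g^{x}_{\tau_{\Omega}},y\bigr),
\end{align*}
while the indicator rewrites as $\mathbbm{1}_{\{\tau_{\Omega_{\varepsilon}}<t\}}=\mathbbm{1}_{\{\tau_{\Omega}<t/\varepsilon^{2}\}}$. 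Collecting and multiplying Hunt's identity by $\varepsilon^{Q}$ would reproduce exactly the right-hand side of Hunt's formula \eqref{eqn.Dirichlet.HK} for $p^{\Omega}_{t/\varepsilon^{2}}(x,y)$, yielding \eqref{eqn.dirichlet.heatkernel.scaling}.

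The main obstacle is notational rather than conceptual: one has to keep the two versions $g$ and $\tilde{g}$ of the process cleanly separated and make sure the corresponding expectations and stopping times are being compared on the same probability space before invoking the distributional equality of Remark \ref{rmk.scaling.process}. Once this bookkeeping is in place, the identity follows directly from the two scaling properties and Hunt's formula.
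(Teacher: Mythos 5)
Your proof is correct, but it takes a genuinely different route from the paper's. The paper first establishes the distributional identity \eqref{eqn.exit.time.scaling} for the indicator of exit events, and then works in the weak (integral) formulation: for an arbitrary $f\in L^{2}(\Omega,dx)$ it rewrites $\int_\Omega f(y)\,p^{\Omega}_{t/\varepsilon^2}(x,y)\,dy = \E^{x}[f(g_{t/\varepsilon^2}),\,\tau_\Omega>t/\varepsilon^2]$ using \eqref{eqn.semigroup.restricted}, applies the space-time scaling \eqref{eqn.scaling.processs} and a change of variables, and concludes by arbitrariness of $f$; this gives equality a.e., and continuity of both kernels upgrades it to every $x,y\in\Omega$. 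You instead work directly with Hunt's formula \eqref{eqn.Dirichlet.HK}, applying the heat kernel scaling \eqref{eqn.global.heatkernel.scaling} and the pathwise coupling $\tilde{g}^{\tilde{x}}_{s}=\delta_{\varepsilon}(g^{x}_{s/\varepsilon^{2}})$ term by term; since the expectation in Hunt's formula depends only on the law, realizing it via this coupled version is legitimate once one checks $\tau_{\Omega_\varepsilon}(\tilde{g}^{\tilde{x}})=\varepsilon^{2}\tau_{\Omega}(g^{x})$ and $\tilde{g}^{\tilde{x}}_{\tau_{\Omega_\varepsilon}}=\delta_\varepsilon(g^{x}_{\tau_\Omega})$, which you do. Your approach has the advantage of producing a pointwise identity at once, bypassing the a.e.-to-everywhere step implicit in the paper's duality argument; the paper's approach sidesteps the more delicate bookkeeping with pathwise couplings and stopping times. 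Both yield \eqref{eqn.dirichlet.heatkernel.scaling}.
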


\begin{proof}
First, note that
\begin{align}\label{eqn.exit.time.scaling}
\mathbbm{1}_{  \left\{ \tau^{x}_{\Omega} > \frac{t}{\varepsilon^{2}} \right\} }   \stackrel{(d)}{=} \mathbbm{1}_{ \left\{ \tau^{\delta_{\varepsilon}(x)}_{\Omega_{\varepsilon}} > t \right\} }.
\end{align}
Indeed, by Remark \ref{rmk.scaling.process}
\begin{align*}
&\Prob \left( \tau^{x}_{\Omega} > \frac{t}{\varepsilon^{2}}  \right) = \Prob^{x} \left( g_{s} \in \Omega \text{ for all }  0\leqslant s\leqslant \frac{t}{\varepsilon^{2}} \right)
\\
& = \Prob\left( X^{x}_{ \frac{s}{\varepsilon^{2}}} \in \Omega  \text{ for all}  0\leqslant s\leqslant t \right)
\\
& = \Prob\left( \delta_{\frac{1}{\varepsilon}} \left( g_{s}^{\delta_{\varepsilon} (x)} \right) \in \Omega \; \text{ for all} \; 0\leqslant s\leqslant t \right)
\\
& =  \Prob^{\delta_{\varepsilon} (x)} \left( g_{s} \in \Omega_{\varepsilon} \; \text{ for all} \; 0\leqslant s\leqslant t  \right)
 = \Prob \left( \tau^{\delta_{\varepsilon}(x)}_{\Omega_{\varepsilon}} > t  \right).
\end{align*}
Thus, for any $f \in L^{2} (\Omega, dx)$ we have that
\begin{align*}
& \int_{\Omega} f(y) p^{\Omega}_{\frac{t}{\varepsilon^{2}}}(x,y)dy =  \E^{x} \left[ f\left( g_{\frac{t}{\varepsilon^{2}}} \right), \tau_{\Omega} > \frac{t}{\varepsilon^{2}}   \right]= \E \left[ f\left( X^{x}_{\frac{t}{\varepsilon^{2}}} \right), \tau^{x}_{\Omega} > \frac{t}{\varepsilon^{2}}   \right]
\\
& = \E \left[ f\left( \delta_{\frac{1}{\varepsilon}}  ( g_{t}^{\delta_{\varepsilon} (x)})  \right),  \tau^{\delta_{\varepsilon}(x)}_{\Omega_{\varepsilon}} > t \right] = \E^{\delta_{\varepsilon} (x) } \left[   f\left( \delta_{\frac{1}{\varepsilon}}  ( g_{t} ) \right),   \tau_{\Omega_{\varepsilon}} > t         \right]
\\
& = \int_{\Omega_{\varepsilon}} f \left( \delta_{\frac{1}{\varepsilon}}   (z)\right)p_{t}^{\Omega_{\varepsilon}} (\delta_{\varepsilon} (x),z )dz = \int_{\Omega} f(y) \varepsilon^{Q} p^{\Omega_{\varepsilon}}_{t} \left(\delta_{\varepsilon} (x), \delta_{\varepsilon} (y) \right) dy,
\end{align*}
which completes the proof.
\end{proof}

We conclude with an application to small deviations.

\begin{theorem}\label{thm.application}
Let $\mathbb{G}$ be a homogeneous Carnot group with the sub-Laplacian $\mathcal{L}$, and $\Omega$ be a bounded open connected set containing the identity $e$, and set  $ \Omega_{\varepsilon} := \delta_{\varepsilon} \left( \Omega \right)$. Let $g_{t}$ be a hypoelliptic Brownian motion such that $g_{0} = e$ a.s. Then
\begin{align*}
\lim_{\varepsilon \rightarrow 0} e^{\frac{\lambda_{1}}{\varepsilon^2} t} \Prob^{e} \left( \tau_{\Omega_{\varepsilon}}  >t \right)= c \phi (e),
\end{align*}
where $\lambda_{1}$ is the spectral gap of $-\mathcal{L}_{\Omega}$ given by Theorem \ref{thm.spectral.results}, and $\phi$ is the corresponding positive eigenfunction given by Theorem \ref{thm.simple.evalue}, and $c = \int_{\Omega} \phi(y) dy$.
\end{theorem}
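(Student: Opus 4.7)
The plan is to combine the scaling identity from Lemma~\ref{lemma.scaling.dirichleth.HK} with the eigenfunction expansion of the survival probability from Corollary~\ref{cor.important}. The scaling reduces the small-ball problem as $\varepsilon\to 0$ to a large-time asymptotic of $\Prob^e(\tau_\Omega > s)$ as $s\to\infty$, and then the spectral gap (i.e.\ simplicity of $\lambda_1$ from Theorem~\ref{thm.simple.evalue}) isolates the leading exponential.

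First, I would apply the distributional identity \eqref{eqn.exit.time.scaling} with $x=e$: since $\delta_\varepsilon(e)=e$, this gives
\[
\Prob^{e}(\tau_{\Omega_\varepsilon}>t) = \Prob^{e}\bigl(\tau_\Omega > t/\varepsilon^2\bigr).
\]
Setting $s:=t/\varepsilon^2$, the claim becomes
\[
\lim_{s\to\infty} e^{\lambda_1 s}\,\Prob^{e}(\tau_\Omega>s) \;=\; c\,\phi(e).
\]
Next, I would use the second identity in Corollary~\ref{cor.important} to write
\[
e^{\lambda_1 s}\,\Prob^{e}(\tau_\Omega>s) \;=\; c_1\phi_1(e) \;+\; \sum_{n\geqslant 2} e^{(\lambda_1-\lambda_n)s}\,c_n\,\phi_n(e),
\]
where $\phi_1=\phi$ and $c_1=\int_\Omega\phi(y)\,dy=c$ by our normalization (the positive eigenfunction from Theorem~\ref{thm.simple.evalue} is the $L^2$-normalized $\phi_1$, up to sign). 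By simplicity of $\lambda_1$, we have $\lambda_1<\lambda_2\leqslant\lambda_3\leqslant\cdots$, so every factor $e^{(\lambda_1-\lambda_n)s}$ with $n\geqslant 2$ decays exponentially. It remains to show the tail sum tends to $0$.

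The main technical step is a uniform bound on the tail. For $s\geqslant 1$ I would factor out $e^{(\lambda_1-\lambda_2)s}$ and use that $e^{(\lambda_2-\lambda_n)s}\leqslant e^{\lambda_2-\lambda_n}$ for $n\geqslant 2$ and $s\geqslant 1$. Then Cauchy--Schwarz yields
\[
\sum_{n\geqslant 2} e^{\lambda_2-\lambda_n}\,|c_n\phi_n(e)| \;\leqslant\; e^{\lambda_2}\left(\sum_{n} e^{-\lambda_n}c_n^2\right)^{1/2}\!\left(\sum_{n} e^{-\lambda_n}\phi_n(e)^2\right)^{1/2}.
\]
The first factor is bounded by $\sum_n c_n^2=\|\mathbbm{1}_\Omega\|_{L^2}^2=|\Omega|<\infty$ (Parseval, since $c_n=\langle\mathbbm{1}_\Omega,\phi_n\rangle$), and the second factor equals $p_1^\Omega(e,e)<\infty$ by the heat kernel expansion in Corollary~\ref{cor.important} together with the upper bound \eqref{eqn.bound.heat.kernel}. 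Therefore the tail is at most a constant times $e^{(\lambda_1-\lambda_2)s}\to 0$, which establishes the desired limit.

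The main obstacle is exactly this tail estimate: one needs termwise control of the spectral series that is not immediate from pointwise convergence alone. The argument above sidesteps this by extracting the gap $\lambda_2-\lambda_1>0$ and using Parseval for the coefficients $c_n$ together with the absolute convergence of the heat kernel series at the diagonal point $(e,e)$, a consequence of Proposition~\ref{prop.efunctions.properties}(3). Everything else is a routine application of the scaling lemma and the spectral decomposition already proved in Section~\ref{sec.spectral.prop}.
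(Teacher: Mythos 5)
Your proof follows the same strategy as the paper: use the scaling identity \eqref{eqn.exit.time.scaling} with $x=e$ to reduce to a large-time asymptotic, expand $\Prob^e(\tau_\Omega>s)$ via Corollary~\ref{cor.important}, isolate the leading term using simplicity of $\lambda_1$ from Theorem~\ref{thm.simple.evalue}, and argue that the tail is $O(e^{(\lambda_1-\lambda_2)s})$. The only substantive difference is the final step: the paper simply writes ``the result follows by letting $\varepsilon$ go to zero,'' leaving the interchange of limit and sum implicit. You make this explicit with a clean Cauchy--Schwarz argument using Parseval for $\sum_n c_n^2 = |\Omega|$ together with $\sum_n e^{-\lambda_n}\phi_n(e)^2 = p_1^\Omega(e,e)<\infty$ from the heat kernel expansion and \eqref{eqn.bound.heat.kernel}. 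This is correct and tighter than what the paper spells out; an equivalent (and what the paper implicitly uses elsewhere, e.g.\ in the heat content section) is a Weierstra{\ss} M-test via the $L^\infty$ bound $\|\phi_n\|_\infty\leqslant d(\Omega)\lambda_n^{Q/2}$ from Proposition~\ref{prop.efunctions.properties}(1), giving $|c_n\phi_n(e)|\leqslant |\Omega|d(\Omega)^2\lambda_n^Q$ and summability after multiplying by $e^{-(\lambda_n-\lambda_1)}$. Either route closes the gap; yours is fine.
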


\begin{corollary}\label{cor.endgoal}
Under the same assumption of Theorem \ref{thm.application} we have that
\begin{equation}\label{eqn.endgoal}
\lim_{\varepsilon \rightarrow 0} - \varepsilon^{2} \log \Prob^{e} \left( \tau_{\Omega_{\varepsilon}} >t \right) = \lambda_{1} t,
\end{equation}
for every $t>0$.
\end{corollary}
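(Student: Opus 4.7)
The plan is to derive the corollary directly from Theorem \ref{thm.application} by taking logarithms, so the argument is essentially a one-step calculation. Let me fix $t>0$ and write $F(\varepsilon) := e^{\lambda_{1} t/\varepsilon^{2}} \, \Prob^{e}\left(\tau_{\Omega_{\varepsilon}} > t\right)$. Theorem \ref{thm.application} gives $F(\varepsilon) \to c\,\phi(e)$ as $\varepsilon \to 0$.

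The first observation I need is that the limit $c\,\phi(e)$ is a strictly positive finite constant. By Theorem \ref{thm.simple.evalue} the first eigenfunction $\phi$ is strictly positive on $\Omega$, so $\phi(e) > 0$ (since $e \in \Omega$) and $c = \int_{\Omega} \phi(y)\,dy > 0$. In particular $\log F(\varepsilon)$ is well-defined for all sufficiently small $\varepsilon$, and by continuity of $\log$ at $c\,\phi(e)$,
\begin{equation*}
\log F(\varepsilon) \longrightarrow \log\bigl(c\,\phi(e)\bigr) \in \R \quad \text{as } \varepsilon \to 0.
\end{equation*}

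Next, I rearrange the definition of $F(\varepsilon)$ to isolate the quantity of interest:
\begin{equation*}
-\varepsilon^{2} \log \Prob^{e}\left(\tau_{\Omega_{\varepsilon}} > t\right) = \lambda_{1} t - \varepsilon^{2} \log F(\varepsilon).
\end{equation*}
Since $\log F(\varepsilon)$ converges to the finite limit $\log(c\,\phi(e))$, the factor $\varepsilon^{2}\log F(\varepsilon)$ tends to $0$ as $\varepsilon \to 0$, and the conclusion $-\varepsilon^{2}\log \Prob^{e}\left(\tau_{\Omega_{\varepsilon}} > t\right) \to \lambda_{1} t$ follows.

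There is no real obstacle here; the only subtle point is to justify that $c\,\phi(e) > 0$ so that the logarithm behaves nicely, and this is immediate from the positivity of the principal eigenfunction established in Theorem \ref{thm.simple.evalue}. All of the deep work is hidden inside Theorem \ref{thm.application}, whose proof rests on the spectral decomposition of the Dirichlet heat kernel from Corollary \ref{cor.important}, the scaling identity in Lemma \ref{lemma.scaling.dirichleth.HK}, and the uniform convergence in Proposition \ref{prop.efunctions.properties} part (3) that isolates the leading $e^{-\lambda_{1} t}$ contribution.
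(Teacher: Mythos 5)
Your proof is correct and follows essentially the same route as the paper: both take logarithms in the limit from Theorem \ref{thm.application}, note that $c\,\phi(e) > 0$ so the log is finite, and conclude that $\varepsilon^{2}\log F(\varepsilon) \to 0$. Your phrasing is if anything a bit cleaner than the paper's, which wraps the same observation in a slightly more roundabout ``finite if and only if'' argument.
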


\begin{example}\label{example}
Let $\vert \cdot \vert$ be a homogeneous norm on $\mathbb{G}$. Then
\begin{align*}
& \lim_{\varepsilon \rightarrow 0} e^{\frac{\lambda_{1}}{\varepsilon^2} t} \Prob^{e} \left( \max_{0\leqslant s \leqslant t} \vert g_{s} \vert < \varepsilon \right)= c \phi (e),
\\
& \lim_{\varepsilon \rightarrow 0} - \varepsilon^{2} \log \Prob^{e} \left( \max_{0\leqslant s \leqslant t} \vert g_{s} \vert < \varepsilon \right) = \lambda_{1} t,
\end{align*}
where $\lambda_{1} >0$ is the spectral gap of $-\mathcal{L}_{B}$ and $B:= \{ x\in \mathbb{G}, \; \vert x \vert < 1  \}$.
\end{example}

\begin{remark}[Spectral gap estimates]\label{rmk.spectralgap.estimates}
If $\mathbb{G}=\Hei$ is the Heisenberg group, it is shown in \cite[Theorem 3.4]{CarfagniniGordina2022} that
\[
\lim_{\varepsilon \rightarrow 0} - \varepsilon^{2} \log \Prob^{e} \left( \max_{0\leqslant s \leqslant t} \vert g_{s} \vert < \varepsilon \right) = c^{2}t,
\]
for some finite constant $c>0$.  Moreover, Example \ref{example} and \cite[Theorem 5.6]{CarfagniniGordina2022} provide an explicit estimate for the first Dirichlet eigenvalue $\lambda_{1} =c^{2}$ for the sub-Laplacian on $\Hei$ in the Kor\'anyi ball. More precisely,
\[
\lambda_{1}^{(2)}  \leqslant \lambda_{1} \leqslant c\left(\lambda_{1}^{(1)}, \lambda_{1}^{(2)} \right),
\]
where
\begin{align*}
& c\left(\lambda_{1}^{(1)}, \lambda_{1}^{(2)} \right) := f \left( x^{\ast} \right) = \inf_{x\in (0,1)} f(x),
\\
& f(x) = \frac{ \lambda_{1}^{(2)}  }{ \sqrt{1-x}} + \frac{ \lambda_{1}^{(1)} \sqrt{1-x} }{4x},
\\
& x^{\ast} = \frac{\sqrt{ \left(\lambda_{1}^{(1)}\right)^{2} +32 \lambda_{1}^{(1)} \lambda_{1}^{(2)}} -3 \lambda_{1}^{(1)} }{2 \left( 4 \lambda_{1}^{(2)} - \lambda_{1}^{(1)} \right)},
\end{align*}
and $\lambda_{1}^{(n)}$ are the lowest Dirichlet eigenvalues of $-\frac{1}{2} \Delta_{\R^{n}}$ in the unit ball in $\R^{n}$.
\end{remark}

\begin{proof}[Proof of Theorem \ref{thm.application}]
By \eqref{eqn.exit.time.scaling} we have that
\begin{align*}
\Prob^{\delta_{\varepsilon} (x) } \left( \tau_{\Omega_{\varepsilon}} >t \right)=\Prob^{x} \left( \tau_{\Omega} > \frac{t}{\varepsilon^{2}} \right)
\end{align*}
for any $x\in \Omega$. Thus, by Corollary \ref{cor.important} we have that
\begin{align*}
\Prob^{e} \left( \tau_{\Omega_{\varepsilon}} >t \right) = \sum_{n=1}^{\infty} e^{-\lambda_{n} \frac{t}{\varepsilon^{2}}}  c_{n} \phi_{n} (e),
\end{align*}
where $\{ \phi_{n} \}_{n=1}^{\infty}$ and $\{ \lambda_{n} \}_{n=1}^{\infty}$ are defined as in Notation \ref{notation.basis} with $c_{n} = \int_{\Omega} \phi_{n} (y) dy$. By Theorem \ref{thm.simple.evalue} and Theorem \ref{thm.spectral.results} there exists a $\phi >0$ such that $\ker ( e^{-\lambda_{1} t} - P^{\Omega}_{t}  ) = \ker ( -\lambda_{1} - \mathcal{L}_{\Omega}  )= \operatorname{Span} \{ \phi \}$. Thus,
\begin{align*}
 e^{\lambda_{1} \frac{t}{\varepsilon^{2}}}\Prob^{e} \left( \tau_{\Omega_{\varepsilon}} >t \right)  = c \phi(e) + \sum_{n=2}^{\infty} e^{-( \lambda_{n} - \lambda_{1}) \frac{t}{\varepsilon^{2}}} c_{n} \phi_{n} (e),
\end{align*}
where $\lambda_{n} \geqslant \lambda_{2}$ for all $n\geqslant 3$, and $\lambda_{2} > \lambda_{1}$ since $\dim  \ker ( -\lambda_{1} - \mathcal{L}_{\Omega}  )=1$. Thus, the result follows by letting $\varepsilon$ go to zero.
\end{proof}

\begin{proof}[Proof of Corollary \ref{cor.endgoal}]
By Theorem \ref{thm.application} we know that
\begin{align*}
\lim_{\varepsilon \rightarrow 0} e^{\frac{\lambda_{1}}{\varepsilon^2} t} \Prob^{e} \left( \tau_{\Omega_{\varepsilon}}  >t \right)= c \phi (e),
\end{align*}
with $c \phi (e)>0$. Then
\begin{align}
& \log ( c\phi(e) ) = \lim_{\varepsilon \rightarrow 0} \log \left(  e^{\frac{\lambda_{1}}{\varepsilon^2} t} \Prob^{e} \left( \tau_{\Omega_{\varepsilon}}  >t \right) \right) \notag
\\
& = \lim_{\varepsilon \rightarrow 0} \left( \log \Prob^{e} \left( \tau_{\Omega_{\varepsilon}}  >t \right)  +   \frac{\lambda_{1}}{\varepsilon^2} t  \right)   = \lim_{\varepsilon \rightarrow 0} \left(   \frac{ \varepsilon^{2} \log \Prob^{e} \left( \tau_{\Omega_{\varepsilon}}  >t \right) + \lambda_{1}t  }{\varepsilon^{2} }   \right), \notag
\end{align}
which is finite if and only if
\begin{align*}
 \lim_{\varepsilon \rightarrow 0} \left(  \varepsilon^{2} \log \Prob^{e} \left( \tau_{\Omega_{\varepsilon}}  >t \right) + \lambda_{1}t \right) =0,
\end{align*}
that is,
\begin{align*}
 \lim_{\varepsilon \rightarrow 0}  - \varepsilon^{2} \log \Prob^{e} \left( \tau_{\Omega_{\varepsilon}}  >t \right) = \lambda_{1}t .
\end{align*}
\end{proof}

\subsection{Large time behavior of the heat content}

In this Section we use the spectral analysis from Section \ref{sec.spectral.prop} to describe the large time behavior of the heat content. Let $\Omega$ be a bounded open connected regular set. We consider the Dirichlet problem for the heat equation on $\Omega$

\begin{align}\label{eqn.par.BVP}
& \left( \partial_{t} - \mathcal{L} \right) u(x,t) =0, &  (t,x)\in (0,\infty) \times \Omega, \notag
\\
& u(t,x) =0, & (t,x)\in (0,\infty) \times \partial \Omega,
\\
& u(0,x)=1, & x\in \Omega. \notag
\end{align}

\begin{definition}\label{def.heat.content}
Let $u$ be the solution to the boundary value problem \eqref{eqn.par.BVP}. The heat content associated with $\Omega$ is given by
\begin{align*}
Q_{\Omega}(t) := \int_{\Omega} u (t,x) dx,
\end{align*}
for $t>0$.
\end{definition}
If $\Omega$ is regular, it is easy to see that $\Prob^{x} \left( \tau_{\Omega} >t \right)$ is the solution to \eqref{eqn.par.BVP}, and hence we can write
\[
Q_{\Omega} (t) = \int_{\Omega} \int_{\Omega} p^{\Omega}_{t} (x,y) dydx.
\]
By Corollary \ref{cor.important} we have that
\begin{equation}\label{eqn.heat.content}
Q_{\Omega} (t) = \int_{\Omega}  \sum_{n=1}^{\infty} e^{-\lambda_{n} t} c_{n} \phi_{n} (x)dx,
\end{equation}
where $c_{n} = \int_{\Omega} \phi_{n} (y)dy$. Note that the series $\sum_{n=1}^{\infty} e^{-\lambda_{n} t} c_{n} \phi_{n} (x)$ converges uniformly on $[\varepsilon, \infty) \times \Omega$ for any $\varepsilon>0$. Indeed, by Proposition \ref{prop.efunctions.properties} we have that, for any $x\in \Omega$ and $t\geqslant\varepsilon$
\begin{align*}
&\vert e^{-\lambda_{n} t} c_{n} \phi_{n} (x) \vert \leqslant e^{-\lambda_{n} t} c_{n} \Vert \phi_{n} \Vert_{L^{\infty} (\Omega, dx)}
\\
& \leqslant \vert \Omega \vert e^{-\lambda_{n} t} \Vert \phi_{n} \Vert_{L^{\infty} (\Omega, dx)}^{2} \leqslant\vert \Omega\vert d(\Omega) e^{-\lambda_{n} \varepsilon} \lambda_{n}^{Q}
\end{align*}
for any $n\in \mathbb{N}$. Thus $\sum_{n=1}^{\infty} e^{-\lambda_{n} t} c_{n} \phi_{n} (x)$ converges uniformly on the set $[\varepsilon, \infty) \times \Omega$ for any $\varepsilon>0$ by Weierstra{\ss}' M-test since the series $\sum_{n=1}^{\infty} e^{-\lambda_{n} \varepsilon} \lambda_{n}^{Q}$ is convergent.

Thus by \eqref{eqn.heat.content} it follows that
\begin{align*}
& Q_{\Omega} (t) = \int_{\Omega}  \sum_{n=1}^{\infty} e^{-\lambda_{n} t} c_{n} \phi_{n} (x) dx
\\
& = \sum_{n=1}^{\infty} e^{-\lambda_{n} t} c_{n}  \int_{\Omega}  \phi_{n} (x)dx = \sum_{n=1}^{\infty} e^{-\lambda_{n} t} c_{n}^{2},
\end{align*}
for any $t>0$. We can then deduce the following large time asymptotics for the heat content
\[
\lim_{t\rightarrow \infty}  e^{\lambda_{1} t }Q_{\Omega}(t) = c_{1}^{2}.
\]

\begin{acknowledgement}
The authors are grateful to Bruce Driver for posing the question. We thank Sasha Teplyaev for  helpful discussions (especially about Dirichlet forms) during the preparation of this work. Zhen-Qing Chen clarified a number of our questions about Dirichlet forms, and directed us to references \cite{ChungWalshBook, KaratzasShreveBMBook}. The first author thanks Luca Capogna and  Ermanno Lanconelli for suggesting useful references. Peter Friz pointed out the connection to similar problems and techniques used in the theory of rough paths. Finally we acknowledge useful discussions with Fabrice Baudoin, Lorenzo Dello Schiavo, Patrick Fitzsimmons, and Tommaso Rossi.
\end{acknowledgement}

\end{document}